\title{On the simplicity and conditioning of \\ low rank semidefinite programs}
\author{Lijun Ding\footnote{School of Operations Research and Information Engineering, Cornell University,
		Ithaca, NY 14850, USA; \texttt{ld446@cornell.edu}} \;and Madeleine Udell\footnote{School of Operations Research and Information Engineering, Cornell University,
		Ithaca, NY 14850, USA;
		\texttt{udell@cornell.edu}}}
\definecolor{purple}{rgb}{0.3,0.0,.4}
\declaretheorem[name=Theorem,numberwithin=section]{thm}
\newcommand{\newcontent}[1]{#1}
\newcommand{\defn}{:\,=}
\newcommand{\BEAS}{\begin{eqnarray*}}
\newcommand{\EEAS}{\end{eqnarray*}}
\newcommand{\BEA}{\begin{eqnarray}}
\newcommand{\EEA}{\end{eqnarray}}
\newcommand{\BEQ}{\begin{equation}}
\newcommand{\EEQ}{\end{equation}}
\newcommand{\BIT}{\begin{itemize}}
\newcommand{\EIT}{\end{itemize}}
\newcommand{\BNUM}{\begin{enumerate}}
\newcommand{\ENUM}{\end{enumerate}}
\newcommand{\beas}{\begin{eqnarray*}}
\newcommand{\eeas}{\end{eqnarray*}}
\newcommand{\bea}{\begin{eqnarray}}
\newcommand{\eea}{\end{eqnarray}}
\newcommand{\beq}{\begin{equation}}
\newcommand{\eeq}{\end{equation}}
\newcommand{\bit}{\begin{itemize}}
\newcommand{\eit}{\end{itemize}}
\newcommand{\ben}{\begin{enumerate}}
\newcommand{\een}{\end{enumerate}}
\newcommand{\ba}{\begin{array}}
\newcommand{\ea}{\end{array}}
\newcommand{\bbm}{\begin{bmatrix}}
\newcommand{\ebm}{\end{bmatrix}}
\newcommand{\eg}{e.g., }
\newcommand{\ie}{i.e., }
\newcommand{\ones}{\mathbf 1}
\newcommand{\reals}{{\mbox{\bf R}}}
\newcommand{\integers}{{\mbox{\bf Z}}}
\newcommand{\sym}{{\mbox{\bf S}}}  
\newcommand{\Id}{\mathcal{I}}
\newcommand{\range}{\mathop{\bf range}}
\newcommand{\rank}{\mathop{\bf rank}}
\newcommand{\nullspace}{{\mathop {\bf null}}}
\newcommand{\tr}{\mathop{\bf tr}}
\newcommand{\diag}{\mathop{\bf diag}}
\newcommand{\ddiag}{\mathop{\bf {ddiag}}}
\newcommand{\inprod}[2]{\langle #1,#2\rangle}
\newcommand{\twonorm}[1]{\left\|#1\right\|_2}
\newcommand{\norm}[1]{\left\|#1\right\|}
\newcommand{\fronorm}[1]{\left\|#1\right\|_{\mbox{\tiny{F}}}}
\newcommand{\opnorm}[1]{\left\|#1\right\|_{\mbox{\tiny{\textup{op}}}}}
\newcommand{\nucnorm}[1]{\left\|#1\right\|_*}
\newcommand{\infnorm}[1]{\left\|#1\right\|_\infty}
\newcommand{\Prob}{\mathop{\bf Prob}}
\newcommand{\ri}{\mathop{\bf ri}}
\newcommand{\indicator}{\chi}
\newcommand{\grad}{\mbox{grad}}
\newcommand{\hess}{\mbox{Hess}}
\theoremstyle{definition} 
\newtheorem{theorem}{Theorem}
\newtheorem{lemma}{Lemma}
\newtheorem{corollary}{Corollary}
\newtheorem{definition}{Definition}
\newcommand{\bigO}[1]{\mathcal{O}(#1)}
\newcommand{\pval}{p_\star}
\newcommand{\dval}{d_\star}
\newcommand{\dm}{n}
\newcommand{\cons}{m}
\newcommand{\dg}{d}
\newcommand{\block}{\mbox{Block}}
\newcommand{\rsol}{r_\star}
\newcommand{\xsol}{X_\star}
\newcommand{\ysol}{y_\star}
\newcommand{\zsol}{Z(\ysol)}
\newcommand{\zsoloy}{Z_\star }
\newcommand{\pertA}{\Delta \Amap}
\newcommand{\pertb}{\Delta b}
\newcommand{\pertC}{\Delta C}
\newcommand{\Amap}{\mathcal{A}}
\newcommand{\Admap}{\mathcal{A}^*}
\newcommand{\trux}{X_\natural}
\newcommand{\projt}{\Pi_{\mathcal{T}}}
\newcommand{\projto}{\Pi_{\mathcal{T}^\perp}}
\newcommand{\projomega}[1]{\Pi_{\Omega_{#1}}}
\newcommand{\hpk}[1]{\mathcal{H}_{\Omega_{#1}}}
\newcommand{\indic}{\mathbbm{1}}
\newcommand{\rprojk}[1]{\mathcal{R}_{\Omega_{#1}}}
\newcommand{\To}{t_0}
\newcommand{\inco}{\mu}
\newcommand{\ob}{\Omega}
\newcommand{\trur}{r_{\natural}}
\newcommand{\tY}{\tilde{Y}}
\newcommand{\tXsol}{\tilde{\xsol}}
\newcommand{\tX}{\tilde{X}}
\newcommand{\truA}{\tilde{A}}
\begin{document}
	\maketitle
\begin{abstract}
	Low rank matrix recovery problems appear widely in
	statistics, combinatorics, and imaging.
	One celebrated method for solving these problems is
	to formulate and solve a semidefinite program (SDP).
	It is often known that the exact solution to the SDP
	with perfect data recovers the solution to the original
	low rank matrix recovery problem.
	It is more challenging to show that an approximate solution to the SDP
	formulated with noisy problem data acceptably solves the original problem;
	arguments are usually ad hoc for each problem setting, and can be complex.
	
	In this note,
	we identify a set of conditions that we call \emph{\newcontent{simplicity}} that
	limit the error due to noisy problem data or incomplete convergence.
	In this sense, \newcontent{simple} SDPs are robust: \newcontent{simple} SDPs can be (approximately)
	solved efficiently at scale;
	and the resulting approximate solutions, even with noisy data, can be trusted.
	Moreover, we show that \newcontent{simplicity}  holds generically,
	and also for many structured low rank matrix recovery problems,
	including the stochastic block model, $\mathbb{Z}_2$ synchronization, and matrix completion.
	Formally, we call an SDP \newcontent{simple} if it has a surjective constraint map,
	admits a unique primal and dual solution pair, and satisfies strong duality and strict complementarity.
	
	However, \newcontent{simplicity}  is not a panacea:
	we show the Burer-Monteiro formulation of the SDP
	may have spurious second-order critical points,
	even for a \newcontent{simple} SDP with a rank 1 solution.
\end{abstract}
\section{Introduction}\label{sec: intro}
We consider a semidefinite program (SDP) in the standard form
\beq\label{p}\tag{\text{$\mathcal{P}$}}
\ba{ll}
\mbox{minimize} & \inprod{C}{X}\\
\mbox{subject to} & \mathcal{A}X =  b
\quad\text{and}\quad
X \succeq 0, \\
\ea
\eeq
where $\langle\cdot,\cdot\rangle$ denotes the matrix trace inner product.
The primal variable is the symmetric positive semidefinite (PSD) matrix
$X \in \sym^\dm_+\subset \reals^{\dm\times \dm}$.
The problem data comprises
a symmetric (but possibly indefinite) cost matrix $C\in \sym^\dm$,
a righthand side $b\in\reals^m$, and
a linear constraint map $\mathcal{A}: \reals^{\dm\times \dm} \rightarrow \reals^m$
with rank $m$ operating on any $H\in \reals^{\dm \times \dm}$ by
$
[\mathcal{A}H]_i = \inprod{A_i}{H},i=1,\dots,m$ for some fixed symmetric $A_i\in \sym^\dm$.  Denote an
arbitrary solution of \eqref{p} as $\xsol$ and the optimal value as $\pval$.

The optimization problem \eqref{p} appears in problems in
statistics \cite{srebro2005rank}, combinatorics \cite{goemans1995improved}, and
imaging \cite{chai2010array}, among others.
Due to the nature of these applications,
practical instances of \eqref{p} such as
matrix completion \cite{srebro2005rank,udell2019why} and MaxCut \cite{goemans1995improved}
are often expected to have low rank solutions.
It is also notable that any instance of \eqref{p} admits a solution
with rank $\rsol$ satisfying $\frac{\rsol(\rsol+1)}{2}\leq \cons$
\cite{barvinok1995problems, pataki1998rank}.

\paragraph{\newcontent{Simplicity} }
Formally, we say an SDP is \emph{simple} if
it has a surjective constraint map,
admits a unique primal and dual 
solution pair,
and satisfies both strong duality and strict complementarity.
(See Section \ref{sec: regularity} for more detail.)
These conditions suffice to guarantee many useful properties about the resulting SDP.

\newcontent{Simplicity}  was found by \cite{alizadeh1997complementarity} to hold \emph{generically}:
for almost all $\Amap$, $b$ and $C$,
\eqref{p} is \newcontent{simple} so long as a primal and dual solution pair exists.
A followup work \cite[Section 5]{drusvyatskiy2016generic} strengthens this result:
for every surjective $\Amap$, \newcontent{simplicity} holds for almost all $b$ and $C$,
again conditioning on the existence of a primal and dual solution pair.

However, realistic applications of semidefinite programming may place
structural constraints on $\Amap$, $b$, and $C$:
for example, in matrix completion, the cost matrix $C = I$;
in MaxCut type SDPs, the constraint map $\Amap=\diag$
and the right hand side $b$ is the vector of all ones.
We will show in Section \ref{sec: Regular low rank SDPs}, and \ref{sec: weakbm} that many of these SDPs,
including $\integers_2$ synchronization and the stochastic block model, are still simple.
We also show in Section \ref{sec: mc} that matrix completion
is \emph{primal simplicity}: it satisfies all conditions for \newcontent{simplicity}  except
for dual uniqueness.

\paragraph{Conditioning and \newcontent{simplicity} }
Many authors have shown that instances of the primal SDP \eqref{p} appearing
statistical or signal processing problems \cite{candes2009exact, waldspurger2015phase,bandeira2018random},
admit a unique low rank solution
which coincides with (or is close to) the underlying true signal.
However, this analysis does not fully solve the original problem:
optimization procedures give reliable solutions only when the problem is \emph{well-conditioned};
otherwise, inaccuracies in the problem data or incomplete convergence can lead to wildly different
reconstructions of the underlying signal.
Here we consider two different notions of problem conditioning:
\begin{enumerate}
	\item \emph{Measurement error:}
	Suppose we obtain perturbed problem data $\Amap+\pertA$, $b+\pertb$, and $C+\pertC$ instead
	of the orignal problem data $\Amap$, $b$, and $C$ due to noisy measurements.
	We solve \eqref{p} with perturbed problem data and obtain a perturbed solution
	$\xsol'$. To ensure that the perturbed solution $\xsol'$ is meaningful for the original problem,
	we must ensure the error in the solution $ \xsol -\xsol'$ is controlled by the
	size of the perturbation $(\pertA,\pertb,\pertC)$ in the data.
	
	We can describe the \emph{sensitivity of the solution} to measurement error
	by finding constants $\alpha,\beta>0$ such that for all small $(\pertA,\pertb, \pertC)$,
	\[
	\fronorm{\xsol -\xsol'}^{\alpha} \leq \beta \left(\norm{\pertA}+\norm{\pertb}+\norm{\pertC}\right).
	\]
	
	\item \emph{Optimization error}:
	Most optimization algorithms offer guarantees on the suboptimality $\tr(CX)- \pval$
	of the putative solution $X$ they return, 
	but many cannot guarantee bounds on the distance to the solution, $\fronorm{X-\xsol}$.
	However, the distance to the solution is usually the more important metric for
	statistical and signal processing applications.
	Hence it is important to understand how (and when) guarantees in suboptimality
	translate into guarantees on the distance to the solution.
	
	We may seek to bound the distance to the solution, $\norm{X-\xsol}$,
	in terms of simpler metrics of optimization error:
	the infeasibility with respect to conic constraints, $(-\lambda_{\min}(X))_+$,
	and linear constraints, $\twonorm{\Amap X-b}$,
	and the suboptimality, $\tr(CX)-\tr(C\xsol)$.
	(Throughout the paper we define $(x)_+ = \max\{0,x\}$ for $x \in \reals$.)
	We produce an \emph{error bound} on the solution
	by finding constants $\gamma,\rho>0$ such that for all $X$ near $\xsol$,
	\[
	\twonorm{X-\xsol}^\rho \leq \gamma \left( \twonorm{\Amap X-b} +(-\lambda_{\min}(X))_++(\tr(CX)-\pval)\right).
	\]
	
\end{enumerate}
The exponents $\rho$ and $\alpha$ and the multiplicative factors $\gamma$ and $\beta$,
can be interpreted as condition numbers of \eqref{p}.

\newcontent{Simple} SDPs obey useful bounds on these condition numbers.
In the literature, it has been found that if the SDP \eqref{p} is simple, then
$\rho=1$ \cite{nayakkankuppam1999conditioning} and $\alpha =2$ \cite{sturm2000error}.
We note that
$\alpha =2$ only requires primal \newcontent{simplicity} .
An upcoming work of ours \cite{DingAndMconicgrowth} shows that
$\rho=2$ under the weaker condition of primal \newcontent{simplicity} .
Estimates of $\beta$ and $\gamma$ for \newcontent{simple} SDPs based on
problem data and solutions are also available respectively in
\cite{nayakkankuppam1999conditioning} and our upcoming work \cite{DingAndMconicgrowth}.
When the SDP \eqref{p} is not \newcontent{simple} but only feasible, then the exponent of
$\rho$ can become as large as $2^{n-1}$ which is shown to be tight \cite[Example 2]{sturm2000error}. In such
cases, the SDP is very ill-conditioned.
Thus if the SDP \eqref{p} is \newcontent{simple} or primal \newcontent{simple},
neither measurement error nor optimization error impede signal recovery,
as the distance to the solution (which is the true signal or close to it) grows at most quadratically in the measurement or optimization error.

\paragraph{\newcontent{Simplicity}  and algorithmic convergence} \newcontent{Simplicity}  also plays an
important role in the convergence analysis of algorithms of SDP. For example:
\begin{itemize}
	\item \newcontent{Simple} SDP can be solved efficiently at scale:
	for example, the storage-optimal algorithm of \cite{ding2019optimal} requires \newcontent{simplicity}  to
	ensure the limit of the dual iterates produces a meaningful approximation of the primal solution $\xsol$.
	\item For \newcontent{simple} SDP, the central path of an interior point method (IPM)
	leads to the analytical center of the solution set
	\cite{halicka2002convergence, luo1998superlinear}.
\end{itemize}
\newcontent{Simplicity}  can also improve the convergence rate for many algorithms:
\begin{itemize}
	\item For SDP that satisfy Slater's condition,
	IPMs can only be shown to converge linearly \cite{nesterov2018lectures}.
	But for primal \newcontent{simple} SDP, IPMs achieve superlinear convergence \cite{luo1998superlinear};
	and for \newcontent{simple} SDP, IPMs achieve quadratic convergence \cite{alizadeh1998primal}.
	\item For the exact penalty formulation of the dual SDP \cite{ding2019optimal},
	subgradient-type methods with constant or diminishing stepsize
	require $\bigO{\frac{1}{\epsilon^2}}$ iterations
	to reach an $\epsilon$-suboptimal dual solution.
	But for \newcontent{simple} SDP, subgradient methods achieve
	faster sublinear rates $\bigO{\frac{1}{\epsilon}}$,
	using the quadratic error bound induced by \newcontent{simplicity}  for the analysis \cite{sturm2000error, johnstone2017faster}.
\end{itemize}

\paragraph{\newcontent{Simplicity}  and the Burer-Monteiro method }
The Burer-Monteiro (BM) \cite{burer2003nonlinear} approach solves the SDP \eqref{p}
by factoring the decision variable, building on earlier work by Homer and
Peinado \cite{homer1997design} that introduced the approach for the MaxCut SDP.
The BM approach factors the decision variable $X = FF^\top$,
with factor $F\in \reals^{\dm \times r}$, and
solves the following (nonconvex) problem:
\begin{equation*} \tag{BM}\label{BM}
	\ba{ll}
	\mbox{minimize} & \tr( CFF^\top ) \\
	\mbox{subject to} & \mathcal{A}(FF^\top) = b.
	\ea
\end{equation*}
When $r$ exceeds the rank of any solution to \eqref{p},
\eqref{BM} and \eqref{p} have the same solution set.

Usually, \eqref{BM} is solved using a Riemannian gradient or trust region method \cite{boumal2018global},
which requires that the feasible set forms a smooth Riemannian manifold.
Following \cite{boumal2018deterministic}, we call such an SDP \emph{smooth}:
the feasible set $\mathcal{A}(FF^\top) = b$ forms a smooth Riemannian manifold.
In this paper we will consider many interesting smooth SDPs:
including MaxCut, OrthogonalCut, and an SDP relaxation of a problem optimizing over a product of spheres;
and statistical problems like $\integers_2$ synchronization and the stochastic block model.
Notice that many interesting large scale SDPs,
such as matrix completion \cite{candes2010power} and phase retrieval \cite{candes2013phaselift},
may not be smooth.

Since these Riemannian optimization methods are only guaranteed to find second order stationary points,
we will say the BM method \emph{succeeds} for a smooth SDP
when all second order stationary points are globally optimal
(and fails otherwise).
A recent result \cite{boumal2018deterministic} shows that
for smooth SDP (and under a few more technical conditions),
for almost all objectives $C$,
BM succeeds if $\frac{r(r+1)}{2}>m$.

Does the BM method succeed for every (smooth) \newcontent{simple} SDP?
Alas, no: we show the Burer-Monteiro approach \eqref{bm} can fail
when $\frac{r(r+1)}{2}+r<m$, even if \eqref{p} is \newcontent{simple}.
This result extends a recent counterexamples due to \cite{waldspurger2018rank}
by showing uniqueness of the dual solution.
Hence storage optimal algorithms for SDP, such as \cite{ding2019optimal},
that operate directly on the SDP (without factorization) have advantages over BM.
\paragraph{Paper organization} We formally define \newcontent{simple} SDPs in Section \ref{sec: regularity}.
Section \ref{sec: notation} introduces the notation used in this paper.
In Section \ref{sec: Regular low rank SDPs}, we show that
every PSD matrix solves a \newcontent{simple} SDP
and that primal \newcontent{simplicity}  holds for almost all objectives $C$ under Slater's condition.
In Section \ref{sec: bmfail}, we construct \newcontent{simple} SDPs for which the Burer-Monteiro approach fails.
In Section \ref{sec: weakbm},
we use \newcontent{simplicity}  to show that the SDPs corresponding to
the stochastic block model and $\mathbb{Z}_2$ synchronization can recover the
ground truth from noisy data.
Notably, we show recovery is possible at higher noise thresholds
than those for which the BM approach is known to succeed.
Finally, in Section \ref{sec: mc},
we show that the celebrated matrix completion SDP is primal simple, but not simple.
\subsection{\newcontent{Simplicity} }\label{sec: regularity}
To start, recall the dual problem of \eqref{p} is
\beq \label{d}\tag{\text{$\mathcal{D}$}}
\ba{ll}
\mbox  {maximize} & \inprod{b}{y} \\
\mbox{subject to}  & C- \mathcal{A}^* y \succeq 0. \\
\ea
\eeq
Here $\inprod{\cdot}{\cdot}$ is the dot product in $\reals^\cons$.
The decision variable is the vector $y\in \reals^\cons$.
The map $\Admap: \reals^m \rightarrow \reals^{\dm \times \dm}$
is the adjoint of the linear map $\Amap$,
which satisfies $\inprod{y}{\Amap X} = \inprod{\Admap y}{X}$.
Explicitly,
$
\Admap(y) = C-\sum_{i=1}^m y_i A_i
$ for $y\in \reals^\cons$.

We now formally state the conditions that define an \newcontent{simple} SDP.
The first two conditions, \emph{strong duality} and \newcontent{\emph{surjective constraint map}},
are standard in the literature.
\begin{definition}[Strong Duality] \eqref{p} and \eqref{d} satisfy strong duality if
	there is a primal-dual solution pair and for any solution pair $(\xsol,\ysol)\in \sym^{\dm}\times \reals^m$ to \eqref{p}
	and \eqref{d},
	\[
	\pval := \tr(C\xsol) = b^\top \ysol =:\dval.
	\]
\end{definition}
\noindent Notably, strong duality holds under \emph{primal and dual Slater's condition}:
existence of feasible primal $X_0\succ 0$ and dual $y_0$ with $C-\Admap y_0\succ 0$.

\newcontent{Surjectivity of the linear constraint map $\Amap$ ensures that there are no redundant linear constraints.}
\newcontent{
	\begin{definition}[Surjective constraint map] We say \eqref{p} has a surjective constraint map
		if the linear constraint map $\Amap$ is surjective, or equivalently, the matrices $A_i$ are linearly independent in $\sym^\dm$.
	\end{definition}%
}
\noindent \newcontent{Simplicity}  also requires strict complementary slackness. 
\begin{definition}[Strict complementarity]
	We say a solution pair $(\xsol,\ysol)\in \sym^{\dm}\times \reals^m$ to \eqref{p} and \eqref{d} is strictly complementary
	if
	\[
	\rank(\xsol) + \rank(C -\Admap \ysol ) = \dm.
	\]
	If the primal \eqref{p} and dual \eqref{d} SDP pair has one strictly complementary solution pair,
	we say the SDP pair satisfies \emph{strict complementarity},
	or simply that the primal SDP \eqref{p} satisfies strict complementarity.
\end{definition}%

Linear programs always have some strictly complementary solution whenever they exist:
there is always some primal optimal $x^\star \in \reals^\dm$ and dual optimal
$z^\star= c-A^\top y^\star \in \reals^\dm$ such that
\[
\text{nnz}(x^\star)+\text{nnz}(z^\star)=n,
\]
where $\text{nnz}$ is the number of nonzeros \cite{goldman1956theory}.
In contrast, semidefinite programs may not satisfy strict complementarity \cite[Example on page 117]{alizadeh1997complementarity}.

Finally, \newcontent{simplicity}  requires that both \eqref{p} and \eqref{d} have unique solutions.
\begin{definition}[\newcontent{Simple SDP}] The SDP \eqref{p} is simple
	if
	\begin{itemize}
		\item[1.] \eqref{p} satisfies strong duality;
		\item[2.] \newcontent{\eqref{p} has a surjective constraint map};
		\item[3.] \eqref{p} satisfies strict complementarity; and
		\item[4.] \eqref{p} and \eqref{d} both have unique solutions.
	\end{itemize}
\end{definition}

Next, we introduce primal \newcontent{simplicity} .
\begin{definition}[Primal simple SDP] The SDP \eqref{p} is \emph{primal simple}
	if it has \newcontent{a surjective constraint map}, and satisfies strong duality and strict complementarity,
	and the solution to \eqref{p} is unique.
\end{definition}

The dual of a primal \newcontent{simple} SDP may admit multiple solutions.
Notice every \newcontent{simple} SDP is primal simple.
Primal \newcontent{simplicity}  is practically important:
for example, the matrix completion SDP \cite{candes2009exact},
introduced in Section \ref{sec: mc},
is primal \newcontent{simple} but not simple.
Primal \newcontent{simple} SDPs inherit some (but not all) of the nice properties
of \newcontent{simple} SDPs.



\paragraph{\newcontent{Simplicity}  under generic problem data} As mentioned in Section \ref{sec: intro},
for almost all $\Amap,C,b$ (under the Lebesgue measure),
if the SDP pair with problem data $\Amap,C,b$ has a primal and a dual solution
then it is \newcontent{simple} \cite[Theorem 11, 14 and 15]{alizadeh1997complementarity}.
In this paper, we also show in Theorem \ref{thm: Primal regular SDP for generic C}
that for fixed $\Amap$ and $b$, the SDP pair is primal \newcontent{simple} for almost all $C$.

\newcontent{\subsubsection{Relationship with conditions defined in the literature}
	Simple SDPs are related to many other ideas proposed earlier in the SDP literature. We review their connections here to clarify the terminology.
	In the following discussion, we assume that \eqref{p}
	satisfies strong duality and has a surjective constraint map.
	
	First, nondegeneracy as defined in \cite{alizadeh1997complementarity}
	is well-known in the interior point methods community,
	and consists of primal nondegeneracy \cite[Definition 5]{alizadeh1997complementarity}
	and dual nondegeneracy \cite[Definition 8]{alizadeh1997complementarity}.
	These same two conditions are called primal and dual \emph{constraint} nondegeneracy
	in the variational analysis community; see e.g.
	\cite[Definition 2.1]{robinson2003constraint} and
	\cite[Definition 8]{chan2008constraint}.
	Primal and dual nondegenerate SDPs are not necessarily simple,
	as these two conditions do not imply strict complementarity
	\cite[Example on page 117]{alizadeh1997complementarity}.
	However, primal nondegeneracy
	does imply dual uniqueness \cite[Theorem 7]{alizadeh1997complementarity},
	and conversely dual nondegeneracy implies primal uniqueness.
	
	Second, the term \emph{strong regularity}
	appears in the study of generalized equations \cite{robinson1980strongly}.
	It has been shown in \cite[Theorem 18]{chan2008constraint} that
	strong regularity for the KKT equation of \eqref{p} is
	equivalent to primal and dual nondegeneracy.
	As noted before, primal and dual nondegeneracy together imply
	primal and dual uniqueness.
	Further, under the assumption of strict complementarity,
	they are equivalent~\cite[Theorem 11]{alizadeh1997complementarity}.
	A simple SDP satisfies
	strong regularity (of the KKT equation);
	the converse is false due to \cite[Example on page 117]{alizadeh1997complementarity}.
	If \eqref{p} is primal simple but has multiple dual solutions,
	then strong regularity (for the KKT equation) fails:
	under strict complementarity, primal nondegeneracy is equivalent to dual uniqueness  \cite[Theorem 7]{alizadeh1997complementarity}.}

\subsection{Notation} \label{sec: notation}
\paragraph{Norms and Eigenvalues} For a matrix $B\in \reals^{\dm_1\times \dm_2}$,
we denote its Frobenius, operator two norm, and nuclear norm (sum of singular values)
as $\fronorm{B},\opnorm{B}$, and $\nucnorm{B}$ respectively.
The operator norm of a linear operator $\mathcal{B}: \reals^{\dm_1\times \dm_2} \rightarrow \reals^{\dm_1\times \dm_2}$
is defined as $\opnorm{\mathcal{B}}=\max_{A\in \small \reals^{\dm_1\times \dm_2}}\fronorm{\mathcal{B}(A)}$.
We write the eigenvalues of a symmetric matrix $A\in \sym^\dm$ in decreasing order as
\[
\lambda_1(A)\geq \lambda_2(A) \dots\geq\lambda_\dm (A).
\]
We define the singular values $\sigma_i:\reals^{n_1\times n_2}\rightarrow \reals$ similarly.
%
%

\paragraph{Inner product} We use the Euclidean inner product for vectors:
for $y$,$z \in \reals^\dm$, $\inprod{y}{z}=\sum_{i=1}^n y_iz_i$.
We use the trace inner product for matrices:
for $X$ and $Y\in \sym^{\dm}$ or $X$ and $Y\in\reals^{\dm_1\times \dm_2}$,
$\inprod{X}{Y}=\tr(X^\top Y) = \sum_{i,j}X_{ij}Y_{ij}$.

\paragraph{Transposes and adjoints} For a vector $v$ or a matrix $A$, $v^\top$ and $A^\top$ denote the transpose.
The adjoint map of a linear map $\Amap$ from $\sym^{\dm} \rightarrow \reals^\cons$ is defined
as the unique linear map $\Amap^*: \reals^m \rightarrow \sym^{\dm}$ such that for every $X\in \sym^{\dm},
y\in \reals^{\cons}$, $\inprod{\Amap X}{y}=\inprod{X}{\Amap^*y}$.

\paragraph{SDP Optimization} The notation $\xsol$ denotes a primal solution to \eqref{p} and $\ysol$ denotes a dual solution to $\eqref{d}$. Define the slack operator $Z: \reals^\dm \to \sym^\dm$
that maps a putative dual solution $y \in \reals^m$
to its associated slack matrix $Z(y) := C - \mathcal{A}^* y$. We omit the dependence on
$y$ if it is clear from the context.

\section{\newcontent{Simple} SDPs are generic}\label{sec: Regular low rank SDPs}
In this section, we first show that any psd matrix solves a \newcontent{simple} SDP.
We also demonstrate that for almost all $C$,
if SDP \eqref{p} satisfies primal Slater's condition and \newcontent{has a surjective constraint map}
and has a primal solution,
then it is primal \newcontent{simple.} We then show that
interesting SDPs, including MaxCut, OrthogonalCut, and ProductSDP (introduced in
\cref{sec: regularSDPs}),
are \newcontent{simple} for almost all $C$. Finally, we demonstrate numerically,
MaxCut SDP of many graphs are indeed simple.

\subsection{Any PSD matrix solves a \newcontent{simple} SDP }
Given any rank $\rsol$ positive semidefinite matrix $\xsol$,
we can construct a \newcontent{simple} SDP with $\xsol$ as its unique solution.

Write the eigenvalue decomposition of $\xsol$  as $\xsol= \sum_{i=1}^{\dm} \lambda_i v_iv_i^T=V\Lambda V^\top$.
Here the eigenvalues satisfy $\lambda_1\geq \cdots \geq \lambda_{\rsol}>\lambda_{\rsol+1}=\cdots =\lambda_n =0$,
and we define the diagonal matrix $\Lambda=\diag(\lambda_1,\dots,\lambda_{\rsol},0,\dots,0)$
and the orthonormal matrix $V= [v_1,\dots,v_n]\in \mathbb{R}^{n\times n}$.

We are now ready to construct the SDP and state our first theorem:
\begin{theorem}
	For any rank $\rsol$ positive semidefinite matrix $\xsol$ with
	eigenvalue decomposition $\xsol =\sum_{i=1}^{\dm} \lambda_i v_iv_i^T$,
	the SDP
	\beq \label{opt: exxyp}
	\ba{lll}
	\mbox{minimize} & \tr( X) & \\
	\mbox{subject to} & \tr(v_iv_i^\top X)=\lambda_i, & i=1,\dots, \rsol,\\
	& \tr(\frac{v_iv_j^\top+v_jv_i^\top}{2}X)= 0, & 1\leq i< j\leq \rsol,\\
	& X\succeq 0, & \\
	\ea
	\eeq
	with variable $X\in \sym^n$ is \newcontent{simple} and has $\xsol$ as its unique solution.
\end{theorem}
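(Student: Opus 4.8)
The plan is to pass to the eigenbasis of $\xsol$, where the whole problem becomes transparent. I would set $\tilde X = V^\top X V$; since $V$ is orthogonal this is a bijection on $\sym^\dm$ that preserves positive semidefiniteness and the trace, so $\tr(X)=\tr(\tilde X)$ and $X\succeq 0 \iff \tilde X\succeq 0$. Because $\inprod{v_iv_i^\top}{X}=\tilde X_{ii}$ and $\inprod{\tfrac{v_iv_j^\top+v_jv_i^\top}{2}}{X}=\tilde X_{ij}$, the constraints simply pin the leading $\rsol\times\rsol$ block of $\tilde X$ to $\diag(\lambda_1,\dots,\lambda_\rsol)$, leaving the remaining entries free subject only to $\tilde X\succeq 0$. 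In this basis the constraint matrices are $E_{ii}$ ($i\le\rsol$) and the symmetrized $E_{ij}$ ($i<j\le\rsol$), which are manifestly linearly independent in $\sym^\dm$; this yields the surjective constraint map (Condition 2).

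For primal uniqueness and the identification of the solution, I would note that $\tr(\tilde X)=\sum_{i=1}^\rsol\lambda_i+\sum_{i>\rsol}\tilde X_{ii}$ has its first sum fixed by the constraints, so minimizing the objective is equivalent to minimizing $\sum_{i>\rsol}\tilde X_{ii}$. Each $\tilde X_{ii}\ge 0$ by positive semidefiniteness, so this sum is at least $0$, attained exactly when $\tilde X_{ii}=0$ for all $i>\rsol$. The key PSD fact is that a zero diagonal entry of a PSD matrix forces its whole row and column to vanish; applying this to indices $i>\rsol$ kills both the trailing $(\dm-\rsol)\times(\dm-\rsol)$ block and the coupling block, leaving $\tilde X=\Lambda$ as the unique minimizer. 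Hence $X=V\Lambda V^\top=\xsol$ is the unique primal solution.

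For the dual, strong duality follows from Slater's condition: $\tilde X=\diag(\lambda_1,\dots,\lambda_\rsol,1,\dots,1)\succ 0$ is strictly primal feasible, and $y=0$ gives slack $Z=I\succ 0$, so both primal and dual Slater hold (Condition 1). Writing a dual variable $y$ through the symmetric $\rsol\times\rsol$ matrix $Y$ it induces on the leading block, the slack becomes block diagonal with blocks $I_\rsol-Y$ and $I_{\dm-\rsol}$, dual feasibility reads $Y\preceq I_\rsol$, and the dual objective is $\inprod{\diag(\lambda_1,\dots,\lambda_\rsol)}{Y}$. Substituting $Y=I_\rsol-S$ with $S\succeq 0$ turns the objective into $\sum_{i=1}^\rsol\lambda_i-\inprod{\diag(\lambda_1,\dots,\lambda_\rsol)}{S}$; since every $\lambda_i>0$ the leading-block diagonal matrix is positive definite, so the inner product is nonnegative with equality iff $S=0$. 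This pins the unique dual solution at $Y=I_\rsol$ (Condition 4, dual part), whose slack $Z^\star$ satisfies $\tilde Z^\star=\mathrm{blkdiag}(0,I_{\dm-\rsol})$ of rank $\dm-\rsol$. Since $\rank(\xsol)=\rsol$, strict complementarity $\rank(\xsol)+\rank(Z^\star)=\dm$ follows (Condition 3).

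I expect the only genuinely delicate points to be the two uniqueness arguments, and both are resolved by strict positivity of the nonzero eigenvalues: on the primal side the ``zero diagonal kills the row and column'' fact closes the gap between $\tilde X_{ii}=0$ and $\tilde X=\Lambda$, and on the dual side the strict inequality $\lambda_i>0$ makes $\diag(\lambda_1,\dots,\lambda_\rsol)$ positive definite, which is exactly what forces $S=0$. Everything else is bookkeeping in the rotated coordinates.
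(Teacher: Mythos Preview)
Your proof is correct and follows essentially the same strategy as the paper: pass to the eigenbasis of $\xsol$, then verify surjectivity, feasibility/optimality, strict complementarity, and primal/dual uniqueness directly in those coordinates. The one noteworthy difference is the dual uniqueness step: the paper argues via complementary slackness ($Z(y')\xsol=0$ forces $\rank Z(y')\le \dm-\rsol$, while the visible $I_{\dm-\rsol}$ block forces $\rank Z(y')\ge \dm-\rsol$, hence the top-left block of $Z(y')$ vanishes), whereas you substitute $Y=I_\rsol-S$ and use that $\langle \diag(\lambda_1,\dots,\lambda_\rsol),S\rangle=0$ with the diagonal strictly positive and $S\succeq 0$ forces $S=0$; both arguments are clean and your version is arguably more direct.
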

\begin{proof}
	Let us first write down the dual, with variables $y_{ij}$, $1\leq i \leq j \leq \rsol$:
	\beq \label{opt: exxyd}
	\ba{ll}
	\mbox  {maximize} & \sum_{i=1}^{\rsol}\lambda_iy_{ii}  \\
	\mbox{subject to}  & I- \sum_{i\leq j\leq \rsol} \frac{v_iv_j^\top +v_jv_i^\top}{2}y_{ij} \succeq 0.\\
	\ea
	\eeq
	We now verify each property required for \newcontent{simplicity} .
	
	\paragraph{\newcontent{Surjective constraint map}} The matrices $A_{ij} = \frac{v_iv_j^\top+v_jv_i^\top}{2}$, $1\leq i\leq  j\leq \rsol$ are
	orthogonal and hence they are linearly independent. As a result, $\Amap$ is sujective.
	\paragraph{Strong duality and strict complementarity}
	Define $\ysol$ with $({\ysol})_{ii}=1$ for
	$i=1\,,\ldots,\,\rsol$ and $({\ysol})_{ij}=0$ for $i \neq j \leq \rsol$. 
	To verify strong duality and the strict complementarity,
	we claim $\xsol$ and $\ysol$ are solutions to
	the primal SDP, \cref{opt: exxyp}, and dual SDP, \cref{opt: exxyd}, respectively.
	Indeed, it is easy to verify that $\xsol$ is primal feasible.
	Furthermore, by writing the slack matrix
	\[
	Z(\ysol)= I - \sum_{i\leq j\leq \rsol} \frac{v_iv_j^\top +v_jv_i^\top }{2}(\ysol) _{ij} =
	I -\sum_{i=1}^{\rsol} v_iv_i^\top\succeq 0,
	\]
	we see $\ysol$ is dual feasible and $Z(\ysol)$ has rank $n-\rsol$.
	Since the primal and dual objective match,
	$\tr(X) = \sum_{i=1}^{\rsol} \lambda_i = \sum_{i=1}^{\rsol} \lambda_i (\ysol)_{ii}$,
	we see $\xsol$ and $\ysol$ are a primal-dual optimal solution pair.
	Since $Z(\ysol)$ has rank $n-\rsol$ and $\rank(\xsol) =\rsol$,
	we see strict complementarity holds.
	
	\paragraph{Uniqueness}
	Suppose that $\ysol'$ solves the dual problem \eqref{opt: exxyd}.
	We will show that $\ysol' = \ysol$, and hence the dual has a unique solution.
	Using strong duality, we know $Z(\ysol')\xsol =0$.
	Moreover, $Z(\ysol')$ and $\xsol$ are psd.
	Hence $Z(\ysol')$ has rank at most $n-\rsol$.
	By the definition of $Z(\ysol')$,
	we see
	\begin{equation}
		\begin{aligned}\label{eqn: ZyprimeunderV}
			Z(\ysol')&=I -  \sum_{i\leq j\leq \rsol} \frac{v_iv_j^\top  +v_jv_i^\top }{2}{\ysol'} _{ij}\\
			&=V
			\begin{bmatrix}
				1 -(\ysol')_{11}& -\frac{(\ysol')_{12}}{2}&\dots&  -\frac{(\ysol')_{1\rsol}}{2} & 0 \\
				\vdots & \ddots&  & \vdots&  0\\
				-\frac{(\ysol')_{\rsol1}}{2}&\dots & & 1 - (\ysol')_{\rsol \rsol} & 0\\
				0 &\dots   &0 & &I_{n-\rsol}  \\
			\end{bmatrix}V^\top.
		\end{aligned}
	\end{equation}
	The lower right block of the inner matrix above is the identity $I_{n-\rsol}\in \sym ^{\dm -\rsol}$.
	Hence we see $Z(\ysol')$ has rank at least $n-\rsol$.
	Thus $Z(\ysol')$ must have rank exactly $n-\rsol$.
	This fact forces the upper left block of $Z(\ysol')$ in \eqref{eqn: ZyprimeunderV} to be $0$.
	Hence, we must have $\ysol' = \ysol$.
	
	To show the primal solution is unique,
	introduce the new variable $S \in \sym^\dm$
	so that $VSV^\top = X$.
	Using this change of variables in \eqref{opt: exxyp},
	we see $\xsol$ uniquely solves \eqref{opt: exxyp}
	if and only if $\Lambda \in \sym ^{\dm}$ uniquely solves
	\beq \label{opt: exxyprime}
	\ba{ll}
	\mbox{minimize} & \tr( S)\\
	\mbox{subject to} & S_{ii}=\lambda_i, \quad i=1,\dots, \rsol\\
	& S_{ij}= 0, \quad 1\leq i< j\leq \rsol,\\
	& S\succeq 0.\\
	\ea
	\eeq
	(Notice that $S=\Lambda$ is optimal for \cref{opt: exxyprime}, using the same
	argument we used to show the optimality of $\xsol$ for \cref{opt: exxyp} above.)
	Since the optimal value of \cref{opt: exxyprime} is $\tr(\Lambda)=\sum_{i=1}^{\rsol} \lambda_i$,
	from the constraints $S_{ii}=\lambda_i$ of \eqref{opt: exxyprime},
	we see that any feasible $S \succeq 0$ of \eqref{opt: exxyprime} has objective value
	$\geq \tr(\Lambda)$.
	To achieve optimality, we must have $S_{rr} = 0$ for $\rsol < r \leq \dm$.
	Now use the fact that $S \succeq 0$ to see $\Lambda$ is the unique solution.
\end{proof}
\subsection{Almost all cost matrices yield a primal \newcontent{simple} SDP}\label{sec: Primal regular SDP for generic C}
We establish the fact that \eqref{p} is primal \newcontent{simple} for almost all cost matrices $C$,
whenever the primal solution exists.

\begin{theorem}\label{thm: Primal regular SDP for generic C}
	Suppose \eqref{p} satisfies \newcontent{the surjective constraint map condition} and the primal
	Slater's condition: there is some $X_0 \in	\sym ^\dm$ such that $X_0 \succ 0$ and $\Amap (X_0)=b$.
	Then for almost all $C\in \sym ^\dm$,
	\eqref{p} is primal \newcontent{simple} as long as the primal solution exists.
\end{theorem}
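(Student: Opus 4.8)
The plan is to handle the four defining conditions of primal simplicity separately, treating strong duality and the surjective constraint map as essentially free and reserving a measure-theoretic argument for strict complementarity and primal uniqueness. Strong duality is immediate: primal Slater's condition, together with the assumed existence of a primal solution (so the optimal value is finite), guarantees by standard conic duality that there is no duality gap and that a dual solution is attained; hence a primal--dual solution pair exists and the values agree for every $C$. The surjectivity hypothesis gives condition~2 outright. It therefore remains to show that, outside a Lebesgue-null set of $C$, every solvable instance is strictly complementary and has a unique primal solution.

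For the remaining two conditions I will cover the ``bad'' set --- those $C$ for which a solution exists but strict complementarity or primal uniqueness fails --- by finitely many images of smooth maps out of manifolds of dimension strictly below $\dim\sym^\dm=\tfrac{\dm(\dm+1)}{2}$; since a $C^1$ image of such a manifold is Lebesgue-null, the bad set is null. The bookkeeping rests on the elementary identity, valid for $0\le a\le\dm$,
\[
\tfrac{a(a+1)}{2}+\tfrac{(\dm-a)(\dm-a+1)}{2}+a(\dm-a)=\tfrac{\dm(\dm+1)}{2},
\]
applied with $a=\rsol=\rank(\xsol)$. Concretely, I parametrize primal--dual configurations by the range $R:=\mathrm{range}(\xsol)$ (a point of the Grassmannian of $\rsol$-planes, of dimension $\rsol(\dm-\rsol)$), the multiplier $y\in\reals^\cons$, and a slack $Z=C-\Admap y\succeq 0$ of rank $s$ whose range lies in $R^\perp$ (forced by complementarity). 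Reconstructing $C=\Admap y+Z$ maps each such configuration family onto the corresponding bad matrices. Because $C$ does not depend on $\xsol$ itself, the feasible solution only enters through the requirement that $R$ admit a rank-$\rsol$ feasible point; writing $\mathcal G_\rsol\subseteq\mathrm{Gr}(\rsol,\dm)$ for the set of such admissible ranges, the image dimension is at most $\dim\mathcal G_\rsol+\cons+s(\dm-\rsol-s)+\tfrac{s(s+1)}{2}$.

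When $s<\dm-\rsol$ (strict complementarity fails), the $Z$-contribution drops relative to the saturating value at $s=\dm-\rsol$ by $\binom{\dm-\rsol-s+1}{2}\ge 1$, and together with the identity above this forces the image dimension strictly below $\tfrac{\dm(\dm+1)}{2}$; so strict-complementarity failure confines $C$ to a null set. Primal uniqueness is the delicate case, since at $s=\dm-\rsol$ the count already saturates. Here I use that under strict complementarity the solution set equals $\{X\succeq 0:\mathrm{range}(X)\subseteq R,\ \Amap X=b\}$, which is a single point exactly when the restriction of $\Amap$ to the symmetric matrices supported on $R$, call it $\mathcal S_R$, is injective. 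Thus uniqueness can fail only on the rank-drop locus $\{R:\rank(\Amap|_{\mathcal S_R})<\tfrac{\rsol(\rsol+1)}{2}\}$, a determinantal condition. The rank bound $\tfrac{\rsol(\rsol+1)}{2}\le\cons$ of \cite{pataki1998rank} then governs the bookkeeping through two cases: if $\tfrac{\rsol(\rsol+1)}{2}>\cons$ the restriction is never injective but the slack $Z$ is then heavily constrained, whereas if $\tfrac{\rsol(\rsol+1)}{2}\le\cons$ the rank-drop locus is a proper subvariety of $\mathcal G_\rsol$; in either case the resulting $C$ again fill only a null set.

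I expect this dimension bookkeeping --- specifically the control of $\dim\mathcal G_\rsol$ and of the rank-drop locus of $\Amap|_{\mathcal S_R}$ --- to be the main obstacle. The difficulty is precisely that we perturb only $C$ while holding the surjective map $\Amap$ and the Slater right-hand side $b$ fixed: the feasible set is rigid, so one cannot invoke a generic transversality in $b$ and must instead show directly that the incidence set $\{(R,X):X\succeq 0,\ \Amap X=b,\ \mathrm{range}(X)=R\}$ and the determinantal locus where $\Amap|_{\mathcal S_R}$ loses rank are genuinely lower-dimensional, with the Pataki--Barvinok threshold $\tfrac{\rsol(\rsol+1)}{2}$ versus $\cons$ dictating the split. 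Once these dimension counts are in place, the conclusion follows from the standard fact that smooth images of lower-dimensional manifolds carry zero Lebesgue measure, taken over the finitely many choices of $\rsol$ and $s$.
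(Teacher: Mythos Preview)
Your route is genuinely different from the paper's. The paper does no dimension counting at all: it applies \cite[Corollary~3.5]{drusvyatskiy2011generic}, which says that for any proper convex $f$ and almost every linear tilt $v$, the function $f-\langle v,\cdot\rangle$ has at most one minimizer $x_v$ and, crucially, $v\in\ri\bigl(\partial f(x_v)\bigr)$. Taking $f$ to be the indicator of the feasible set and $v=-C$ gives primal uniqueness immediately; Slater's condition is used only to split the subdifferential via the sum rule, after which the relative-interior condition unwinds to the existence of a dual slack $Z=C-\Admap y\succeq 0$ with $\ker Z=\range(X_C)$, i.e.\ strict complementarity. The whole argument is a few lines of subdifferential calculus.

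Your Sard-type argument, by contrast, has a genuine gap that is more serious than the proposal acknowledges. The count you propose, $\dim\mathcal G_\rsol+m+s(\dm-\rsol-s)+\tfrac{s(s+1)}{2}$, need not fall below $\tfrac{\dm(\dm+1)}{2}$ even for $s<\dm-\rsol$. Using your identity, the saturating value at $s=\dm-\rsol$ equals $\dim\mathcal G_\rsol+m-\tfrac{\rsol(\rsol+1)}{2}+\tfrac{\dm(\dm+1)}{2}-\rsol(\dm-\rsol)$, so you would need $\dim\mathcal G_\rsol\le \rsol(\dm-\rsol)+\tfrac{\rsol(\rsol+1)}{2}-m$ (the transversal bound on the feasible rank-$\rsol$ stratum) before the drop of $\binom{\dm-\rsol-s+1}{2}$ helps. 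This bound can fail for fixed $(\Amap,b)$ satisfying all your hypotheses. Concretely, take $\dm=4$, $m=3$, $A_j=e_1e_{j+1}^\top+e_{j+1}e_1^\top$ for $j=1,2,3$, and $b=0$: Slater holds ($X_0=I$), $\Amap$ is surjective, yet $\mathcal G_1=\{[v]\in\mathbb{RP}^3:v_1=0\}\cup\{[e_1]\}$ has dimension $2$, and your count at $\rsol=1$, $s=2$ gives $2+3+2+3=10=\tfrac{\dm(\dm+1)}{2}$, not strictly less. The theorem is still true for this instance (one can check directly that strict complementarity always holds and non-uniqueness is confined to $\{\rank C'<3\}$ in the lower $3\times3$ block), but your argument does not establish it.

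The underlying issue is that the Alizadeh--Haeberly--Overton dimension-counting strategy uses genericity in $(\Amap,b)$ to force transversality of the affine section with the rank strata; once $(\Amap,b)$ is frozen, that transversality is exactly what can fail, and neither the bound on $\dim\mathcal G_\rsol$ nor the claim that the rank-drop locus of $\Amap|_{\mathcal S_R}$ is a proper subvariety of $\mathcal G_\rsol$ follows automatically. The paper's subdifferential argument sidesteps all of this.
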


\begin{proof} We utilize \cite[Corollary 3.5]{drusvyatskiy2011generic}: for a convex extended value function $f:\reals^\dm \rightarrow \reals \cup \{+\infty\}$, for almost all $v\in \reals ^{\dm}$, the perturbed function $f_v(x) = f(x)-v^\top x$
	admits at most one minimizer $x_v$ and satisfies $v\in \ri \left (\partial f(x_v)\right)$, the relative
	interior of $\partial f(x_v)$.
	
	To exploit this theorem, we set $v=-C\in \sym^{\dm}$ and take $f$ to be the function
	\begin{equation}\label{eq: fsdp}
		\begin{aligned}
			\indicator_{\{\Amap X =b\}} + \indicator_{\{X\succeq 0\}},
		\end{aligned}
	\end{equation}
	where $\indicator_{\mathcal C}(x)$ is the indicator function of a convex set $\mathcal C$:
	$0$ if $X\in \mathcal C$ and $+\infty$ otherwise.
	%
	Using  \cite[Corollary 3.5]{drusvyatskiy2011generic},
	we see that for almost all $C$, the problem
	$\min (\indicator_{\{\Amap X =b\}} + \indicator_{\{X\succeq 0\}})(X) + \tr (CX)$ has at most one solution $X_C$, and
	\begin{equation}
		\begin{aligned}\label{eqn: subdifgenericC}
			-C&\in \ri\left( \partial \left( \indicator_{\{\Amap X =b\}} + \indicator_{\{X\succeq 0\}} \right)(X_C) \right)\\
			& \overset{(a)}{=} \ri\left( \partial \left( \indicator_{\{\Amap X =b\}} \right)(X_C)+ \partial \left(\indicator_{\{X\succeq 0\}} \right)(X_C) \right)\\
			& \overset{(b)}{=} \ri \left(\partial \left( \indicator_{\{\Amap X =b\}} \right)\right)(X_C)+ \ri \left(\partial \left(\indicator_{\{X\succeq 0\}} \right)(X_C) \right)\\
			& \overset{(c)}{=}-\{\Admap y\mid y\in \reals ^\cons \} -\{
			Z \mid Z\succeq 0,\, \ker(Z)=\range(X_C) \},
		\end{aligned}
	\end{equation}
	which implies that $C = \Admap y - Z$ for some slack matrix $Z\succeq 0$ satisfying $\ker(Z)=\range(X_C)$.
	Here step $(a)$ uses Slater's condition to apply the sum rule of the subdifferential.
	Step $(b)$ uses \cite[Proposition 1.3.6]{bertsekas2009convex}: the sum rule for the relative interior.
	Step $(c)$ uses basic sub-differential calculus. Hence, there is some $y$ and $Z$ such that
	$Z=C-\Amap^*y \succeq 0$, $\ker(Z)=\range(X_C)$, and
	\[
	\rank (Z) + \rank (X_C)=n \quad \textrm{and} \quad 
	\tr (Z X_C)=0.
	\]
	Hence $y$ is dual optimal and strict complementarity holds.
	%
\end{proof}

\subsection{MaxCut-type SDP are \newcontent{simple} for almost all $C$}\label{sec: regularSDPs}
In this section, we introduce three classes of SDPs that generalize the
SDP relaxation of the MaxCut problem \cite{goemans1995improved},
with applications in statistical
signal recovery, optics, and subproblems of important algorithms.
We show in \cref{cor: sdpregular}  that
they are \newcontent{simple} for almost all $C$ based on \cref{thm: Primal regular SDP for generic C}.

\paragraph{MaxCut}
We call an SDP a \ref{MaxCut}-type SDP if it is of the form
\beq\label{MaxCut}\tag{MaxCut}
\ba{ll}
\mbox{minimize} & \tr(CX)\\
\mbox{subject to} & \diag( X) =  1
\quad\text{and}\quad
X \succeq 0. \\
\ea
\eeq
Here we do not require the cost matrix $C$ to be a negative Laplacian matrix.

\ref{MaxCut}-type SDP can be used to find approximations
of the maximum weight cut in a graph \cite{goemans1995improved},
to recover an object of interest from optical measurements~\cite{waldspurger2015phase},
and to identify the cluster corresponding to each node in the stochastic block model \cite{bandeira2018random}.

\paragraph{Orthogonal cut}
For any $M\in \reals^{S\dg\times S\dg},s\leq S$,
we denote by $\block_s(M)$ the $s$-th diagonal $\dg \times \dg$ block of $M$.
An \ref{ocut}-type problem has decision
variable $X\in \sym^{S\dg}$ for some integer $S>0$ and $\dg=1,2$, or $3$,
and is of the form
\beq\label{ocut}\tag{OrthogonalCut}
\ba{ll}
\mbox{minimize} & \tr(CX)\\
\mbox{subject to} &  \block_s(X) =  I_\dg, \quad s=1,\dots,S,\\
&
X \succeq 0. \\
\ea
\eeq
Note that when $\dg=1$, \eqref{ocut} reduces to \eqref{MaxCut},
with $m = \frac{S\dg(\dg+1)}{2}$ constraints.

The \ref{ocut}-type SDP generalizes the \ref{MaxCut}-type SDP,
and appears in sensor network localization \cite{cucuringu2012sensor}
and ranking problems \cite{cucuringu2016sync}.

\paragraph{ProductSDP: optimization over a product of spheres}
Finally, we introduce \eqref{sdpproduct},
an SDP relaxation of a quadratic program over a product of spheres.
Let $D$ be a positive integer and
let $S_1,\dots,S_m$ be a partition of the set $[D]:=\{1,\dots,D\}$:
$S_i\cap S_j =\emptyset$ for all $i\not=j$, and $\cup_{i=1}^m S_i = [D]$.
A \ref{sdpproduct}-type problem, with decision variable $X\in \sym^{D}$,
takes the form
\beq\label{sdpproduct}\tag{ProductSDP}
\ba{ll}
\mbox{minimize} & \tr(CX)\\
\mbox{subject to} & \sum_{k\in S_i} X_{kk}=1, \quad i=1,\dots,m,\\
&	X \succeq 0. \\
\ea
\eeq
Note that when the cardinality of each $S_i$ is one, \eqref{sdpproduct} reduces to \eqref{MaxCut}.

To explain the name of this SDP, suppose $x_i \in \mathbb{R}^{|S_i|}$ for each $i=1,\ldots,m$.
The constraint $\inprod{x_i}{x_i}=1$ ensures that $x_i$ is on
the sphere $\mathcal{S}^{|S_i|-1}$ in $\mathbb{R}^{|S_i|}$.
Now stack the variables $x_i$ for $i=1,\ldots,m$
as a vector $x\in \mathbb{R}^n$.
The SDP \eqref{sdpproduct} is a relaxation of the quadratic program
\beq\label{bmproduct}
\ba{ll}
\mbox{minimize} & \tr(Cxx^\top)\\
\mbox{subject to} & x\in \prod_{i=1}^{m}\mathcal{S}^{|S_i|-1}
\ea
\eeq
with $xx^\top$ replaced by $X$.
Problems of this form can appear as trust-region subproblems,
\eg \cite[Section 5.3]{boumal2018deterministic}.

Having defined these three classes of SDP,
we show all of these problems are almost always simple.

\begin{corollary}\label{cor: sdpregular}
	\newcontent{The \ref{MaxCut}-type, \ref{ocut}-type, and \ref{sdpproduct}-type SDPs} are simple	for almost any cost matrix $C$.
\end{corollary}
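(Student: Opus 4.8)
The plan is to verify the hypotheses of \cref{thm: Primal regular SDP for generic C} for each of the three families, apply it to obtain primal simplicity for almost all $C$, and then upgrade to full simplicity by establishing dual uniqueness separately. The pleasant feature is that this last step will require no genericity at all: dual uniqueness will follow from the diagonal or block structure of the constraints, so genericity of $C$ is needed only to secure strict complementarity and primal uniqueness.

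First I would check surjectivity of $\Amap$ and primal Slater's condition. The constraint matrices are $A_k = e_k e_k^\top$ for \eqref{MaxCut}, the symmetrized pairs $\tfrac12(e_i e_j^\top + e_j e_i^\top)$ with $i,j$ in a common block for \eqref{ocut}, and $A_i = \sum_{k \in S_i} e_k e_k^\top$ for \eqref{sdpproduct}; the latter have disjoint supports because the $S_i$ partition $[D]$. In each case these matrices are linearly independent, so $\Amap$ is surjective. Primal Slater holds with $X_0 = I$ for \eqref{MaxCut} and \eqref{ocut}, and with $X_0$ the diagonal matrix equal to $1/|S_i|$ on each $k \in S_i$ for \eqref{sdpproduct}. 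I would also note that each feasible set is compact: the diagonal of $X$ is bounded and $X \succeq 0$ then bounds every entry. Hence a primal solution always exists, which both discharges the ``as long as the primal solution exists'' hypothesis and, together with primal Slater, yields strong duality with an attained dual solution.

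With these hypotheses in hand, \cref{thm: Primal regular SDP for generic C} gives, for almost all $C$, that each SDP is primal simple: surjective constraint map, strong duality, strict complementarity, and a unique primal solution $\xsol$. It remains to prove dual uniqueness. Fix a factorization $\xsol = FF^\top$ with $\range(F) = \range(\xsol)$. For any dual optimal $y$, strong duality gives $\tr(Z(y)\xsol) = 0$ with $Z(y), \xsol \succeq 0$, hence $Z(y)\xsol = 0$ and so $Z(y)F = 0$. Given two dual optimal points $y, y'$, subtracting yields $\bigl(\Admap y' - \Admap y\bigr)F = 0$, i.e.\ the structured matrix $\sum_i (y_i - y_i')A_i$ annihilates $F$. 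For \eqref{MaxCut} this is $\diag(y - y')F = 0$, and $(\xsol)_{kk} = \twonorm{F_{k,:}}^2 = 1$ makes every row of $F$ nonzero, forcing $y = y'$. For \eqref{sdpproduct} the same computation gives $(y_i - y_i')F_{k,:} = 0$ for $k \in S_i$, while $\sum_{k\in S_i}\twonorm{F_{k,:}}^2 = 1$ guarantees a nonzero row in each block, again forcing $y = y'$. For \eqref{ocut}, writing $F_s$ for the rows of $F$ in block $s$, the constraint $\block_s(\xsol) = I_\dg$ gives $F_s F_s^\top = I_\dg$; the difference $\Delta_s$ of the two block dual variables satisfies $\Delta_s F_s = 0$, and right-multiplying by $F_s^\top$ forces $\Delta_s = 0$.

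The main obstacle is conceptual rather than computational: \cref{thm: Primal regular SDP for generic C} supplies only primal simplicity, so the real content of the corollary is the dual-uniqueness step. I would be careful that this step uses only the \emph{existence} of a primal optimal $\xsol$ (from compactness) and not its uniqueness, so that the two genericity-dependent conditions and the structural dual-uniqueness condition remain cleanly separated; this separation also makes transparent why dual uniqueness is in fact unconditional for these three families.
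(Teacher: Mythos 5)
Your proposal is correct, and it follows the same skeleton as the paper's proof: invoke \cref{thm: Primal regular SDP for generic C} (after checking Slater's condition and compactness of the feasible set, so a primal solution exists) to get primal simplicity for almost all $C$, and then establish dual uniqueness separately by a structural argument that needs no genericity. The difference lies in how the dual-uniqueness step is discharged. The paper cites \cite[Proposition 9]{waldspurger2018rank} for the key injectivity fact --- there is no nonzero $y$ with $\Admap(y)\xsol = 0$ --- and deduces dual uniqueness from the complementarity relation $Z(y)\xsol = 0$; it then obtains surjectivity of the constraint map as a consequence of dual uniqueness. You instead prove that injectivity fact from scratch: writing $\xsol = FF^\top$, the feasibility constraints (unit diagonal, identity blocks $F_sF_s^\top = I_\dg$, or unit block traces) force each row or block of $F$ to be nonzero or full rank, so $\Admap(y-y')F = 0$ forces $y = y'$; and you verify surjectivity directly from linear independence of the constraint matrices. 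Your route buys a self-contained proof with no external citation, and it makes transparent that dual uniqueness holds \emph{unconditionally} for these three families (for every $C$ admitting a primal-dual solution pair, not just generic $C$); the paper's route is shorter by outsourcing exactly this computation to Waldspurger and Waters. One small point worth making explicit in your write-up: the complementarity relation $Z(y)\xsol = 0$ used in your dual argument requires strong duality and dual attainment, both of which follow from the primal Slater condition you verified, so the logical order (Slater first, then dual uniqueness) matters.
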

\begin{proof}
	We first check dual uniqueness and \newcontent{verify the constraint map is surjective}.
	Then we verify primal \newcontent{simplicity}  to conclude that these three classes of SDP are simple.
	
	\paragraph{Dual uniqueness and \newcontent{surjective constraint map}}
	First, note the property \newcontent{surjective constraint map} follows directly from the uniqueness
	of the dual solution.
	We show dual uniqueness by contradiction:
	if the dual is not unique,
	there is some $\Delta y$ such that $\Amap^*(\Delta y)=0$ and
	$\ysol+\alpha \Delta y$ for some $\alpha\in \mathbb{R}$ is still optimal.
	Using \cite[Proposition 9]{waldspurger2018rank}, we know there is no nonzero $y$ such that
	\[
	\Admap(y)\xsol =0.
	\]
	It is then immediate the dual is unique by noting $Z(y)\xsol =0$ for any dual optimal $y$.
	\footnote{\newcontent{In fact,
			primal nondegeneracy [Definition 5]\cite{alizadeh1997complementarity}, a stronger
			condition than dual uniqueness,
			always holds for \ref{MaxCut} even when strict complementarity fails.
			To prove this fact, one can check
			the definition directly, or check \cite[Assumption 1.1a]{boumal2018deterministic} and use \cite[Proposition 6.6]{boumal2018deterministic}.}}
	
	\paragraph{Primal \newcontent{simplicity}} The primal solution exists because the feasible region of each class is compact and nonempty.
	Slater's condition for these three classes of SDP can be easily verified using a well-chosen diagonal matrix. Hence Theorem \ref{thm: Primal regular SDP for generic C} asserts these three classes are primal \newcontent{simple} for almost all $C$.
\end{proof}

\subsection{Numerical verification for real-world SDP}
In this section, we numerically verify that the MaxCut problems \eqref{MaxCut}
corresponding to several graphs are simple.
In particular, we use the Gset graphs G1 to G20 \cite{Gset};
in the MaxCut relaxation, the cost matrix $C$ is the negative graph Laplacian.
Each graph has $n=800$ vertices, so
the MaxCut SDP \eqref{MaxCut} has a decision variable $X$ of size $800\times 800$.

To verify strict complementarity, we must compute the rank of the primal and
dual solution $\xsol$ and $\zsol$, $r_p$ and $r_d$, and see whether $r_p+r_d =\dm$.

To verify uniqueness of the primal solution,
define a matrix $U\in \reals^{\dm \times(n- r_d)}$ whose columns form an orthonormal basis
for the null space of $Z$.
Define the linear operator $\Amap_{\zsoloy}:\sym^{\dm-r_d}\rightarrow \reals ^\dm$,
$\Amap_U(S)=USU^\top$, where $\Amap = \diag$.
According to \cite{alizadeh1997complementarity}[Theorem 9 \& 10],
the primal solution $\xsol$ is unique if the smallest singular value $\sigma_{\min}(\Amap_{\zsoloy })
\defn\min_{\fronorm{S}=1}\twonorm{\Amap_{\zsoloy}(S)}$ is nonzero.

To verify uniqueness of the dual solution,
define a matrix $V_1\in \reals^{\dm \times {r_p}}$ whose columns form
an orthonormal basis for the column space of $X$
and $V_2 \in \reals^{\dm \times (\dm -r_p)}$ whose columns form a basis for the null space of $X$.
Define the matrix $\Amap^*_{\xsol} \in \reals ^{\dm r_p\times \dm }$ where
the $k$-th column of $\Amap^*_{\xsol }$ is
$[\Amap^*_{\xsol}]_{\cdot k} =  \mathbf{vec}\left(  \begin{bmatrix}
	V_1^\top e_ke_k^\top V_1 \\
	V_2^\top e_k e_k^\top  V_1
\end{bmatrix}\right)$ for $k=1,\dots,\dm$\footnote{Here $e_i$ is the $i$-th standard basis vector in $\reals^\dm$
	and $\mathbf{vec}$ stacks the columns of a matrix.}.
Then according to \cite[Theorem 6 \& 7]{alizadeh1997complementarity}, the dual is unique if
the smallest singular value of $\Amap^*_{\xsol}$ is nonzero.

Numerically, we obtain $\xsol$ and $\zsol$ using the MOSEK solver \cite{mosek2010mosek}.
We estimate the rank by the number of eigenvalues larger than $10^{-6}$,
and denote the smallest eigenvalue larger than $10^{-6}$ as $\lambda_{\min>0}(\xsol)$ and
$\lambda_{\min>0}(\zsol)$ respectively. We compute their condition numbers defined as
$\kappa_{\xsol} := \frac{\lambda_1(\xsol)}{\lambda_{\min>0}(\xsol)}$
and
$\kappa_{\zsoloy}: = \frac{\lambda_{1}(\zsoloy)}{\lambda_{\min>0}(\zsoloy)}$.
We compute the condition numbers of $\Amap_{\xsol}$ and $\Amap_{\zsoloy}^*$ defined as
$\kappa\left(\Amap _{\zsoloy}\right):=\frac{\opnorm{\Amap_{\zsoloy}}}{\sigma_{\min}(\Amap_{\zsoloy })}$
and
$\kappa\left(\Amap^* _{\xsol}\right):=\frac{\sigma_1(\Amap^*_{\xsol})}{\sigma_{\dm}\left(\Amap^* _{\xsol} \right)}$.
The results are reported in Table \ref{table: numericalVerficationMaxCut}.
As can be seen, \newcontent{simplicity}  is indeed satisfied for every MaxCut problem from G1 to G20.
For graph G11, the condition number $\zsoloy$ is about $3\times 10^{5}$ and its $\lambda_{\min>0}(\zsol)$
is actually only $10^{-5}$ (not shown here) meaning that strict complementarity holds in a very weak sense.
\begin{table}
	\begin{tabular}{|cccccccc|}
		\hline Graph& $\dm$ &
		$\rank(\xsol)$ & $\rank(\zsoloy)$ 
		&$\kappa_{\xsol}$ & $\kappa_{\zsoloy}$
		& $\kappa(\Amap_{\zsoloy})$ & $\kappa(\Amap^*_{\xsol})$\\
		\hline
		\hline
		G1& 800 &
		13 &  787 
		& 13.99 & 3269.0
		& 4.083 & 2.026\\
		G2& 800
		& 13
		& 787  
		& 11.65 & 2770.0
		& 4.123 & 1.901\\
		G3&
		800
		& 14 & 786
		& 187.4 & 1590.0
		& 4.413 & 2.062\\
		G4&
		800&
		14 & 786
		& 78.68 & 678.0
		& 4.527 & 2.419\\
		G5&
		800
		& 12 & 788
		& 18.24 & 2258.0
		& 3.841 & 2.055\\
		G6&
		800
		& 13 & 787
		& 50.17 & 1206.0
		& 4.149 & 1.935\\
		G7&
		800
		& 12 & 788
		& 12.91 & 25060.0
		& 3.84 & 2.085\\
		G8&
		800
		& 12 & 788
		& 50.32 & 496.7
		& 3.84 & 2.331\\
		G9&
		800&
		12 & 788
		& 10.29 & 619.7
		& 3.845 & 2.086\\
		G10&
		800
		& 12 & 788
		& 11.53 & 1008.0
		& 3.777 & 1.929\\
		G11&
		800
		& 6 & 794
		& 10.14 & $3.404\times 10^5$
		& 9.478 & 2.619\\
		G12&
		800
		& 8 & 792
		& 48.88 & 48370.0
		& 9.876 & 2.445\\
		G13&
		800
		& 8 & 792 
		& 56.67 & 5221.0
		& 7.161 & 2.177\\
		G14&
		800
		& 13 & 787
		& 18.61 & 2517.0
		& 4.525 & 2.222\\
		G15&
		800
		& 13 & 787
		& 33.7 & 7516.0
		& 4.523 & 2.237\\
		G16&
		800
		& 14 & 786
		& 270.8 & 2443.0
		& 4.918 & 2.059\\
		G17&
		800
		& 13 & 787
		& 177.2 & 2323.0
		& 4.521 & 2.199\\
		G18&
		800
		& 10 & 790
		& 13.42 & 6182.0
		& 3.932 & 1.915\\
		G19&
		800
		& 9 & 791
		& 10.89 & 10580.0
		& 3.51 & 2.149\\
		G20&
		800
		& 9 & 791
		& 172.9 & 3586.0
		& 3.502 & 2.246\\
		\hline
	\end{tabular}
	\caption{Summary statistics of MaxCut problem on Gset graphs verify \newcontent{simplicity} .
		Strict complementarity holds, as numerically $\rank(\xsol) + \rank(\zsoloy)$
		is equal to the dimension $n=800$ for every problem.
		The small condition numbers of the linear maps $\Amap_{\zsoloy}$
		and $\Amap^*_{\xsol}$ verify primal and dual uniqueness, respectively.}
	\label{table: numericalVerficationMaxCut}
\end{table}

\section{Burer-Monteiro may fail for \newcontent{simple} SDPs}\label{sec: bmfail}
In this section, we show that the \eqref{BM} formulation of \eqref{p}
admits second order stationary points that are not globally optimal
even for \newcontent{simple} SDPs with low rank ($1$ or $2$ or $3$) solutions.

Recall from the introduction the \emph{Burer and Monteiro approach} (BM approach)
to semidefinite programming, which replaces the SDP \eqref{p} by
the following nonlinear optimization problem with decision variable $F\in \reals^{\dm \times r}$:
\begin{equation}\tag{BM}\label{bm}
	\ba{ll}
	\mbox{minimize} & \tr( CFF^\top )=:f(F)\\
	\mbox{subject to} & \mathcal{A}(FF^*) = b.
	\ea
\end{equation}
This problem is in general nonconvex.

Nonlinear optimization solvers such as Riemannian trust regions \cite{boumal2018global}
can guarantee that they find a second order stationary
point (SOSP) of such a problem, but cannot guarantee
(or even check) that they have found a global solution.
When the constraint set is a
manifold, as it is for all the examples discussed in the previous section,
a putative solution $F$ is second order stationary
if its Riemannian gradient is $0$ and its Riemannian Hessian is positive semidefinite.
See Appendix \ref{sec: appendixBMstationary} for further discussion.

Hence we can guarantee that the BM approach finds the global optimum
if we can prove that all SOSPs are globally optimal.
The following definition serves as a useful shorthard as we understand when this condition holds.
\begin{definition}
	We say the BM approach \emph{succeeds} for an SDP \eqref{p}
	if every SOSP $F$ of \eqref{bm} is globally optimal,
	and hence $X=FF^\top$ is optimal for \eqref{p}.
	Conversely, we say the BM approach \emph{fails}
	if \eqref{bm} has any SOSP that is not global optimal.
\end{definition}
Note that as a practical matter, a nonlinear solver for \eqref{bm} might
produce a globally optimal SOSP even for a problem that admits non-optimal SOSPs.

Recall from the introduction that for almost all $C$, when $\frac{r(r+1)}{2}>m$,
any SOSP of \eqref{bm} is globally optimal \cite{boumal2018deterministic}.
On the other hand, building on results by \cite{waldspurger2018rank},
we will demonstrate
a positive measure set of \newcontent{simple} SDP of each
of the three classes described in \Cref{sec: regularSDPs}
for which BM fails whenever $\frac{r(r+1)}{2}+r\leq m$.

\subsection{Examples: MaxCut, OrthogonalCut, and ProductSDP}
Let us first recall the \eqref{MaxCut}  SDP we described in \cref{sec: regularSDPs}:
\beq \tag{MaxCut}
\ba{ll}
\mbox{minimize} & \tr(CX)\\
\mbox{subject to} & \diag( X) =  1
\quad\text{and}\quad
X \succeq 0. \\
\ea
\eeq
As demonstrated in \cite[Corollary 1]{waldspurger2018rank}, if
\[
\frac{r(r+1)}{2}+r >n,
\]
then for almost all $C$, any SOSP $F$ of the BM formulation \eqref{BM} of \eqref{MaxCut} is global optimal.
Hence the matrix $FF^\top$ solves \eqref{MaxCut}.
However in \cite[Corollary 1]{waldspurger2018rank}, the authors show that if
\[
\frac{r(r+1)}{2}+r \leq n,
\]
then there is a positive measure set of the cost matrix $C$ for which \eqref{MaxCut} has a unique rank $1$ solution but the BM approach fails.

Are these SDP particularly nasty?
On the contrary! Our contribution, stated in the following theorem, is to show that
these SDPs are \newcontent{simple}.
We also generalize these results to \eqref{ocut} and \eqref{sdpproduct}.

\begin{theorem}\label{thm: failBM} Fix a positive integer $r$. If
	\[
	\frac{r(r+1)}{2}+r \leq n,
	\]
	then there is a set of cost matrices $C$ with positive measure
	for which \eqref{MaxCut} admits a unique rank $1$ solution and is \newcontent{simple},
	but the BM approach fails.
	
	The same result holds for \eqref{sdpproduct} if \[
	\frac{r(r+1)}{2}+r \leq m.
	\]
	For \eqref{ocut}, the same result holds, except that the solution has rank $\dg$, if
	\[
	\frac{r(r+1)}{2}+r\dg \leq m = \frac{S\dg(\dg+1)}{2}.
	\]
\end{theorem}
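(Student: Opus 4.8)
The plan is to separate the two claims in the theorem---failure of \eqref{BM} and \newcontent{simplicity}  of \eqref{p}---and to obtain them on overlapping sets of cost matrices. For \eqref{MaxCut}, the existence of a positive-measure set $\mathcal{B}$ of cost matrices for which there is a unique rank $1$ solution and BM fails is precisely \cite[Corollary 1]{waldspurger2018rank}, available whenever $\frac{r(r+1)}{2}+r\leq n$. The only new ingredient is \newcontent{simplicity} , and for this I would invoke \cref{cor: sdpregular}, which already gives that \eqref{MaxCut} is \newcontent{simple} for almost all $C$; let $\mathcal{S}$ denote the corresponding set, whose complement is Lebesgue-null. Then $\mathcal{B}\cap\mathcal{S}$ has the same (positive) measure as $\mathcal{B}$, and every $C$ in it produces a \newcontent{simple} \eqref{MaxCut} instance with a unique rank $1$ solution for which BM fails. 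This proves the \eqref{MaxCut} case.

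Because \cref{cor: sdpregular} covers \newcontent{simplicity}  for all three classes, the identical intersection argument reduces the \eqref{sdpproduct} and \eqref{ocut} cases to producing, for each, a positive-measure set of cost matrices for which the target low-rank matrix is the unique optimum while BM admits a non-optimal second-order stationary point. Here I would adapt the construction of \cite{waldspurger2018rank} from the diagonal constraint of \eqref{MaxCut} to the grouped constraints of \eqref{sdpproduct} and the block constraints of \eqref{ocut}. Concretely, fix a target global solution $\xsol$ (rank $1$ for \eqref{sdpproduct}, rank $\dg$ for \eqref{ocut}) and a feasible candidate $\hat F\in\reals^{n\times r}$ with $\hat F\hat F^\top\neq\xsol$. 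First-order stationarity of $f$ at $\hat F$ selects a multiplier $y$ whose slack $Z=C-\Admap y$ satisfies $Z\hat F=0$; for $\hat F$ to be spurious, $Z$ must fail to be positive semidefinite, yet the Riemannian Hessian of $f$ at $\hat F$ must remain positive semidefinite along all tangent directions. The goal is to show that these two requirements, together with $\xsol$ being the unique minimizer, are met on an open cone of cost matrices, hence on a set of positive measure.

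The three thresholds come from a dimension count. Write $\kappa$ for the rank of the target optimum ($\kappa=1$ for \eqref{MaxCut} and \eqref{sdpproduct}, $\kappa=\dg$ for \eqref{ocut}); the governing inequality is $\frac{r(r+1)}{2}+r\kappa\leq m$, with $m=n$ for \eqref{MaxCut} and $m=\frac{S\dg(\dg+1)}{2}$ for \eqref{ocut}. The term $\frac{r(r+1)}{2}$ counts the constraints needed to pin down the rank $r$ factor $\hat F$ modulo the gauge symmetry of the factorization, while the $r\kappa$ term accounts for the directions that must be controlled to keep the negative curvature of $Z$ hidden from the tangent space at $\hat F$ while separating $\hat F\hat F^\top$ from the rank $\kappa$ optimum. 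When the sum does not exceed the number $m$ of linear constraints, there is enough freedom to realize such a configuration on a full-dimensional family of $C$.

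The step I expect to be the main obstacle is the second-order analysis for \eqref{sdpproduct} and \eqref{ocut}: computing the Riemannian Hessian of $f$ on the product-of-spheres and block-orthogonality manifolds at the candidate $\hat F$, and verifying that it can be made positive semidefinite while $\hat F$ stays strictly suboptimal on a positive-measure set of $C$. This is where the specific structure of the constraints enters and where the calculations of \cite{waldspurger2018rank} must be genuinely reworked rather than merely cited. By contrast, once these BM-failure sets are in hand, attaching \newcontent{simplicity}  is immediate through the null-complement intersection with the generic set supplied by \cref{cor: sdpregular}.
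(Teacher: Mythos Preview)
Your argument for \eqref{MaxCut} is correct and takes a somewhat different route from the paper. The paper does not intersect the Waldspurger--Waters set with a generic-simplicity set; instead it extracts each simplicity ingredient directly from the Waldspurger--Waters construction (strong duality from their Proposition~4, rank-$1$ primal uniqueness from their Corollary~2, strict complementarity from their Lemmas~2 and~9), and then verifies dual uniqueness separately via the observation, already used in \cref{cor: sdpregular}, that no nonzero $y$ satisfies $\Admap(y)\xsol=0$ for these constraint maps. Your intersection argument is cleaner in that it avoids tracing through the internals of \cite{waldspurger2018rank} for strict complementarity; the paper's argument is slightly sharper in that it shows the \emph{entire} Waldspurger--Waters set consists of simple instances, not just a full-measure subset of it.

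Where your proposal goes astray is for \eqref{ocut} and \eqref{sdpproduct}. You anticipate having to rework the second-order analysis of \cite{waldspurger2018rank} on the block-orthogonality and product-of-spheres manifolds, and you flag this as the main obstacle. In fact there is nothing to rework: \cite{waldspurger2018rank} already treats both settings, with the positive-measure BM-failure sets (including the low-rank uniqueness of the primal optimum) given by their Corollary~2 for \eqref{ocut} and Corollary~3 for \eqref{sdpproduct}. The paper simply cites these. Once you know this, your intersection argument with \cref{cor: sdpregular} goes through verbatim for all three classes, and the ``main obstacle'' you identified disappears. So your plan is sound, but you have misjudged where the work lies.
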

\begin{proof}
	The proofs of dual uniqueness and  \newcontent{the surjective constraint map property} are the same as in the proof of \cref{cor: sdpregular}.
	We next verify the failure of BM, and primal \newcontent{simplicity} .
	\paragraph{Failure of BM, and Primal \newcontent{simplicity}}
	Waldspurger and Waters show that there is a positve measure set of cost matrices $C$
	for which \eqref{MaxCut} satisfies:
	(1) strong duality \cite[Proposition 4]{waldspurger2018rank},
	(2) uniqueness of a primal solution $\xsol$ with rank $1$ \cite[Corollary 2]{waldspurger2018rank},
	(3) strict complementarity for a dual solution $\ysol$ \cite[Lemma 2, Lemma 9 and $\xsol Z(\ysol)=0$]{waldspurger2018rank},
	(4) the BM approach fails \cite[Corollary 1]{waldspurger2018rank}.
	Together with dual uniqueness and \newcontent{the surjective constraint map property},
	these results verify the theorem statement for \eqref{MaxCut}.
	
	\paragraph{OrthogonalCut and ProductSDP} The proof for the other two SDPs
	follows exactly the same argument as above,
	using \cite[Corollary 2]{waldspurger2018rank} for \eqref{ocut}
	and \cite[Corollary 3]{waldspurger2018rank} for \eqref{sdpproduct}.
\end{proof}

\section{Noisy SDPs are simple} \label{sec: weakbm}
In section \ref{sec: Regular low rank SDPs},
we saw that many interesting SDPs are \newcontent{simple} for almost any cost matrix $C$.
In this section, we show that the (very structured)	cost matrices
that appear in certain statistical problems also yield \newcontent{simple} SDPs.
In these problems, the objective measures agreement with observations of a ground-truth object,
while the constraints restrict the complexity of the solution.
Importantly, \newcontent{simplicity}  of these problems
guarantees that the solution of the SDP recovers the ground truth.

More precisely, we consider the SDP relaxations of the following statistical problems:
\begin{itemize}
	\item $\integers_2$ Synchronization
	\item Stochastic Block Model
\end{itemize}
We show that these SDP relaxations are \newcontent{simple} with high probability.

We also demonstrate a strong advantage to solving the original SDP rather than
using the BM approach (when applicable): these SDPs can provably recover the ground truth
under much higher noise
\newcontent{than the noise level (provably) allowable using the BM approach.}

\subsection{$\integers_2$ Synchronization} \label{sec: Z2}
Consider a binary vector $z\in \{\pm 1\}^\dm$.
The $\integers_2$ synchronization problem is to
to recover the vector $z$ up to a sign
from the observations $Y = zz^\top + \gamma W$,
where $W$ is symmetric with iid  standard normal upper diagonal entries, and $0$ diagonal
entries. The value $\gamma$ is the noise level.
The SDP proposed in the literature with decision variable $X\in \sym^\dm$ is
\beq\label{Z_2sync}\tag{\text{$\integers_2$ Sync}}
\ba{ll}
\mbox{minimize} & \tr(-YX)\\
\mbox{subject to} & \diag(X) =  1
\quad\text{and}\quad
X\succeq 0. \\
\ea
\eeq
The corresponding Burer-Monteiro formulation with variable $F\in \reals^\dm \times r$ is
\beq\label{bmZ_2}\tag{\text{BM $\integers_2$ Sync}}
\ba{ll}
\mbox{minimize} & \tr(-YFF^\top)\\
\mbox{subject to} & \diag( FF^\top) =  1. \\
\ea
\eeq

It is intuitive that the problem is more challenging as the
noise level $\gamma$ increases.
For \eqref{Z_2sync}, if the noise level satisfies
$\gamma \leq \sqrt{\frac{\dm}{(2+\epsilon)\log \dm}}$ for some numerical constant $\epsilon>0$,
it admits $zz^\top$ as its unique solution with high probability \cite[Proposition 3.6]{bandeira2018random}.
But for \eqref{bmZ_2} with $r=2$, the best known theoretical results state that
the noise level $\gamma$ must be less than $c\dm^{\frac{1}{6}}$ for some small numerical constant $c>0$
to ensure the BM formulation succeeds,
\ie all second order stationary points $F$ satisfy $FF^\top =zz^\top$\cite{bandeira2016low}.
The gap between $\gamma =\sqrt{\frac{\dm}{(2+\epsilon)\log \dm}}$  and
$\mathcal{O}(\dm^{\frac{1}{6}})$ is polynomially large.

We now prove \ref{Z_2sync} is \newcontent{simple} whenever $\gamma \leq \sqrt{\frac{\dm}{(2+\epsilon)\log \dm}}$.
The uniqueness of the primal is proved in \cite[Proposition 3.6]{bandeira2018random}.
The dual optimal solution proposed in \cite[Proposition 3.6]{bandeira2018random} is
\[
\ysol = -\ddiag(Yzz^\top), \quad \text{and} \quad \zsoloy= -Y-(\diag(\ysol))= \ddiag(Yzz^\top)-Y,
\]
where $\ddiag:\sym ^\dm \rightarrow \reals^\dm $ is the adjoint operator of $\diag: \reals^\dm \rightarrow \sym^\dm $. Note that
$\zsoloy z = \ddiag(Yzz^\top)z-Yz=0$ using the fact that $z\in \{\pm 1\}^\dm$.
Using the proof of
\cite[Proposition 3.6, proof on pp356]{bandeira2018random} and $\gamma \leq \sqrt{\frac{\dm}{(2+\epsilon)\log \dm}}$,
we find that with high probability, there is a numerical constant $c\in (0,1)$ such that
\begin{equation}
	\begin{aligned}\label{eq: z2dualoptimallambda2}
		\lambda_{n-1}(\zsoloy)\geq cn.
	\end{aligned}
\end{equation}
We see $\zsoloy \succeq 0$ and is optimal as $\zsoloy zz^\top = 0$.
Moreover, strict complementarity is satisfied, as $\lambda_{n-1}(\zsoloy)>0$.
\newcontent{Surjectivity of the constraint map} and the uniqueness of the dual can be verified in the same way
as in the proof of Theorem \ref{thm: failBM}.
We summarize our findings as the following theorem.
\begin{thm}
	For the $\integers_2$ synchronization problem,
	if the noise level $\gamma <\sqrt{\frac{\dm}{(2+\epsilon)\log \dm}}$ for some
	numerical constant $\epsilon>0$, then with high probability
	the SDP \eqref{Z_2sync} is \newcontent{simple} with primal solution $zz^\top$.
	Moreover, the dual solution satisfies $\lambda_{n-1}(\zsoloy)>cn$ for some numerical constant $c\in(0,1)$.
\end{thm}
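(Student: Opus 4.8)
The plan is to verify the four defining properties of a simple SDP for \eqref{Z_2sync} by exhibiting an explicit primal--dual pair and certifying it with a single spectral estimate. The primal candidate is $\xsol=zz^\top$, which is feasible since $\diag(zz^\top)=\mathbf{1}$ for $z\in\{\pm1\}^\dm$, and has rank $1$. The dual candidate is $\ysol=-\ddiag(Yzz^\top)$ with associated slack matrix $\zsoloy=\ddiag(Yzz^\top)-Y$, exactly as proposed above the theorem statement. First I would record the algebraic identity $\zsoloy z=0$: because $z$ has $\pm1$ entries, $\ddiag(Yzz^\top)z$ reproduces $Yz$ entrywise, so $z\in\ker(\zsoloy)$ and the complementary slackness condition $\zsoloy\xsol=0$ holds for free. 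This already delivers optimality and strong duality for the pair $(\xsol,\ysol)$ once dual feasibility $\zsoloy\succeq0$ is in place.

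The crux of the argument, and the step I expect to be the main obstacle, is the spectral estimate $\lambda_{\dm-1}(\zsoloy)\ge c\dm$ with high probability for some constant $c\in(0,1)$. This is where the noise model $Y=zz^\top+\gamma W$ and the threshold $\gamma\le\sqrt{\dm/((2+\epsilon)\log \dm)}$ enter. The approach is to restrict $\zsoloy$ to the orthogonal complement of $z$ and decompose it into a deterministic positive contribution coming from the signal $zz^\top$ and a random perturbation coming from $\gamma W$; one then controls the perturbation using a concentration bound on $\opnorm{W}$ together with an estimate on the diagonal correction $\ddiag(Yzz^\top)$. Rather than reproving this, I would import the estimate directly from the proof of \cite[Proposition 3.6]{bandeira2018random}, which under exactly this threshold lower bounds the second-smallest eigenvalue of the slack matrix by a constant multiple of $\dm$. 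Combined with the fact that $z$ is a zero eigenvector, the bound $\lambda_{\dm-1}(\zsoloy)\ge c\dm>0$ forces zero to be the smallest eigenvalue, so it simultaneously yields $\zsoloy\succeq0$ (dual feasibility) and $\rank(\zsoloy)=\dm-1$.

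With the spectral gap in hand the remaining properties follow quickly. Strict complementarity holds because $\rank(\xsol)+\rank(\zsoloy)=1+(\dm-1)=\dm$. Primal uniqueness of $\xsol=zz^\top$ is exactly the content of \cite[Proposition 3.6]{bandeira2018random} under the same threshold, so I would cite it rather than reprove it. For dual uniqueness and surjectivity of the constraint map $\diag$, I would reuse the argument from the proof of Theorem~\ref{thm: failBM} (equivalently Corollary~\ref{cor: sdpregular}): by \cite[Proposition 9]{waldspurger2018rank} there is no nonzero $y$ with $\Admap(y)\xsol=0$, and since every dual optimal $y$ satisfies $\zsoloy\xsol=0$, the only admissible dual perturbation direction must vanish, forcing uniqueness; surjectivity of $\diag$ then follows immediately from dual uniqueness. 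Assembling strong duality, the surjective constraint map, strict complementarity, and primal and dual uniqueness establishes simplicity, and the bound $\lambda_{\dm-1}(\zsoloy)\ge c\dm$ is precisely the ``moreover'' claim.
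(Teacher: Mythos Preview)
Your proposal is correct and follows essentially the same approach as the paper: the same primal candidate $zz^\top$ and dual candidate $\ysol=-\ddiag(Yzz^\top)$ are used, the key spectral bound $\lambda_{\dm-1}(\zsoloy)\ge c\dm$ is imported from the proof of \cite[Proposition 3.6]{bandeira2018random}, primal uniqueness is cited from the same source, and dual uniqueness together with surjectivity is handled by reusing the argument from the proof of Theorem~\ref{thm: failBM}. The only cosmetic difference is that you spell out the rank arithmetic for strict complementarity a bit more explicitly than the paper does.
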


\subsection{Stochastic Block Model}
The stochastic block model (SBM) is structurally quite similar to $\integers_2$ synchronization.
The SBM posits that we observe the edges and vertices of a graph $\mathcal{G}$
with $n$ vertices that are split into two clusters according to a binary
membership vector $z\in\{-1,1\}^\dm$.
For each pair of vertices $(i,j) \in [n] \times [n]$ with $i\not=j$,
the undirected edge $(i,j)$ is formed with probability $p\in [0,1]$ if vertices $i$ and $j$
are in the same cluster ($z_i = z_j$) and with probability $0 \leq q < p$ otherwise. 
The goal is to recover the cluster membership vector $z$.
For simplicity, we further assume that $n$ is \emph{even} and that the clusters are balanced:
$n/2$ entries of $z$ are $+1$ and $n/2$ are $-1$.
Let $A$ be the adjacency matrix of $\mathcal{G}$ with diagonal entries set to be $\frac{p-q}{2}$.
The SDP proposed to recover $z$ by \cite{bandeira2016low}, with variable $X$, is
\beq\label{SBM}\tag{SBM}
\ba{ll}
\mbox{maximize} & \tr((A-\frac{p+q}{2}J)X)\\
\mbox{subject to} & \diag(X) =  1
\quad\text{and}\quad
X\succeq 0. \\
\ea
\eeq
where the matrix $J=\ones\ones^\top -I \in \sym^{\dm}$.
The corresponding Burer-Monteiro formulation with variable $F\in \reals^{\dm \times r}$ is
\beq\label{bmSBM}\tag{BM SBM}
\ba{ll}
\mbox{minimize} & \tr((A-\frac{p+q}{2}J)FF^\top)\\
\mbox{subject to} & \diag( FF^\top) =  1. \\
\ea
\eeq
(There are other SDP formulations for SBM which make weaker assumptions;
see \cite{bandeira2018random}.
However, there are no guarantees for the corresponding Burer-Monteiro relaxations.)

To see the relation between \eqref{Z_2sync} and \eqref{SBM},
we note the cost matrix $A-\frac{p+q}{2}J$ can be decomposed as
\[
A-\frac{p+q}{2}J = \frac{p-q}{2}zz^\top + E,
\]
where the error matrix $E$ has zero diagonal, expectation $0$, and satisfies that for $z_i=z_j$, $i\not=j$,
\[
E_{ij} =\begin{cases}
	1-p & \text{with probability}\, p\\
	-p &\text{with probability}\, 1-p
\end{cases}
\]
and for $z_i\not=z_j$,
\[
E_{ij} =\begin{cases}
	1-q & \text{with probability}\, q\\
	-q &\text{with probability}\, 1-q.
\end{cases}
\]
We may rescale the cost matrix $A-\frac{p+q}{2}J$ by $\frac{2}{p-q}$ to form
\[
\truA = \frac{2}{p-q}(A-\frac{p+q}{2}J)= zz^\top+\frac{2}{p-q}E,
\]
which has the same form as the observation matrix $Y = zz^\top+\gamma W$ in Section \ref{sec: Z2}.

To establish the fact that \eqref{SBM} is simple, let us work with the following form of SDP
whose cost matrix is the rescaled version $\truA$:
\beq\label{SBMrsc}\tag{\text{$\truA$-SBM}}
\ba{ll}
\mbox{maximize} & \tr(\truA X)\\
\mbox{subject to} & \diag(X) =  1
\quad\text{and}\quad
X\succeq 0. \\
\ea
\eeq
Clearly \eqref{SBM} is \newcontent{simple} if and only if \eqref{SBMrsc} is \newcontent{simple}.
The dual certificate we construct here is
\[
\ysol = -\ddiag(\truA zz^\top), \quad \text{and} \quad \zsoloy= -\truA-(\diag(\ysol))= \ddiag(\truA zz^\top)-\truA,
\]

Using \cite[Lemma 11 and its proof]{bandeira2016low}, $\zsol$ is dual optimal, certifies
$zz^\top$ as the unique solution of \eqref{SBMrsc} and satisfies
$\lambda_2(\zsol)>cn$ (for some small but universal constant $c$) if
(for some large but universal constant $C$)
\[
\opnorm{\frac{2}{\sqrt{n}(p-q)}E}\leq \sqrt{\frac{\dm}{C\log\dm}}, \quad \text{and} \infnorm{\frac{2}{\sqrt{n}(p-q)}Ez}\leq \sqrt{\frac{n}{C}}.
\]
Here $\infnorm{\frac{2}{\sqrt{n}(p-q)}Ez}$ is the largest entry in absolute value of $\frac{2}{\sqrt{n}(p-q)}Ez$. 
Using \cite[Lemma 18, 19]{bandeira2016low}, the inequality of the previous display holds
if the signal strength satisfies
\[
\lambda(p,q) \defn \frac{p-q}{\sqrt{2(p+q)}}\sqrt{n}\geq C\sqrt{\log \dm}
\]
for some large universal constant $C$.

\cite[Theorem 6]{bandeira2016low} also states conditions under which \eqref{bmSBM}
provably succeeds. These conditions require $\lambda(p,q)\geq Cn^{1/3}$.
This requirement is polynomially larger than the $\lambda(p,q)$ that guarantees \newcontent{simplicity}  of \eqref{SBM}.
We summarize our findings as the following theorem.
\begin{thm}
	For the SBM problem, if the signal strength satisfies that $\lambda(p,q)= \frac{p-q}{\sqrt{2(p+q)}}\sqrt{n}\geq C\sqrt{\log \dm}$ for some
	numerical constant $C>0$, then with high probability, the SDP \eqref{SBM} is \newcontent{simple} with primal solution $zz^\top$ and the dual solution of \eqref{SBMrsc}
	satisfies $\lambda_{n-1}(\zsoloy)>cn$ for some numerical constant $c\in(0,1)$.
\end{thm}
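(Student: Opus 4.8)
The plan is to mirror the proof of the $\integers_2$ synchronization theorem, exploiting the structural similarity recorded in the reduction $\truA = \frac{2}{p-q}(A-\frac{p+q}{2}J)=zz^\top + \frac{2}{p-q}E$. First I would reduce to the rescaled program \eqref{SBMrsc}: since $p>q$, the cost matrices of \eqref{SBM} and \eqref{SBMrsc} differ only by the positive scalar $\frac{2}{p-q}$, so the two programs share the same primal feasible set and the same primal solution set, and their dual solution sets agree up to scaling by that factor. Because strict complementarity (a rank condition) and primal/dual uniqueness are invariant under positive scaling of the objective, \eqref{SBM} is \emph{simple} if and only if \eqref{SBMrsc} is, and it suffices to analyze \eqref{SBMrsc}.

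Next I would install the explicit dual certificate $\ysol = -\ddiag(\truA zz^\top)$ with slack $\zsoloy = \ddiag(\truA zz^\top)-\truA$ and verify the complementary-slackness relations directly. The key algebraic identity is $\zsoloy z = 0$: since $z\in\{\pm1\}^\dm$, the diagonal matrix $\ddiag(\truA zz^\top)$ has $i$-th diagonal entry $z_i(\truA z)_i$, so $\ddiag(\truA zz^\top)z = \truA z$ and hence $\zsoloy z = \truA z - \truA z = 0$. This yields $\zsoloy zz^\top = 0$, so once $\zsoloy \succeq 0$ is established, $(\zsoloy,\ysol)$ is dual feasible and complementary to $zz^\top$, certifying primal and dual optimality and strong duality.

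The technical heart of the argument is the spectral estimate $\lambda_{\dm-1}(\zsoloy) > cn$ on the second-smallest eigenvalue of $\zsoloy$. I would obtain it from \cite[Lemma 11 and its proof]{bandeira2016low}, which gives $\zsoloy\succeq0$ together with this gap lower bound, provided the rescaled error obeys the operator-norm and $\ell_\infty$ controls $\opnorm{\frac{2}{\sqrt{n}(p-q)}E}\leq \sqrt{\frac{\dm}{C\log\dm}}$ and $\infnorm{\frac{2}{\sqrt{n}(p-q)}Ez}\leq\sqrt{\frac{n}{C}}$. The remaining step is to show that the signal-strength hypothesis $\lambda(p,q)\geq C\sqrt{\log\dm}$ forces these two bounds with high probability, which is exactly the content of \cite[Lemmas 18 and 19]{bandeira2016low}, where the Bernoulli structure of $E$ is converted into the required concentration inequalities. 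I expect this concentration step to be the main obstacle, but it is fully handled by the cited lemmas, so the task is to invoke them with the correct constants rather than to reprove them.

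Finally I would read off \emph{simplicity}. The bound $\lambda_{\dm-1}(\zsoloy)>0$ shows $\ker(\zsoloy)=\mathrm{span}(z)$, hence $\rank(\zsoloy)=\dm-1$; together with $\rank(zz^\top)=1$ this gives $\rank(zz^\top)+\rank(\zsoloy)=\dm$, which is strict complementarity. Complementary slackness then forces any primal optimal $X$ to satisfy $\range(X)\subseteq\ker(\zsoloy)=\mathrm{span}(z)$, so $X=tzz^\top$, and $\diag(X)=1$ pins $t=1$; thus $zz^\top$ is the unique primal solution. Surjectivity of $\Amap=\diag$ is immediate, and dual uniqueness follows exactly as in Theorem \ref{thm: failBM}: for any two dual optima the difference $\Delta y$ satisfies $\Admap(\Delta y)zz^\top=0$, and \cite[Proposition 9]{waldspurger2018rank} then forces $\Delta y=0$. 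These four properties establish that \eqref{SBMrsc}, and hence \eqref{SBM}, is \emph{simple}.
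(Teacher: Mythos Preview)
Your proposal is correct and follows essentially the same approach as the paper: reduce to the rescaled program \eqref{SBMrsc}, install the dual certificate $\ysol=-\ddiag(\truA zz^\top)$, invoke \cite[Lemma 11]{bandeira2016low} for the spectral gap $\lambda_{\dm-1}(\zsoloy)>cn$ under the two error-control conditions, supply those via \cite[Lemmas 18, 19]{bandeira2016low} from the hypothesis $\lambda(p,q)\geq C\sqrt{\log\dm}$, and verify surjectivity and dual uniqueness as in Theorem~\ref{thm: failBM}. Your derivation of primal uniqueness from $\ker(\zsoloy)=\mathrm{span}(z)$ is slightly more explicit than the paper (which folds this into the citation of \cite[Lemma 11]{bandeira2016low}), but the route is the same.
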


\section{Primal \newcontent{simple} SDP: Matrix Completion}\label{sec: mc}
We have seen many \newcontent{simple} SDPs in previous sections. In this section,
we demonstrate that the matrix completion SDP,
a celebrated method for data imputation,
is not \newcontent{simple} but only primal \newcontent{simple}.

The matrix completion problem seeks to recover
a rank $\trur$ matrix $\trux \in \reals^{n_1 \times n_2}$ from
a few entrywise observations $(\trux)_{ij}, (i,j)\in \Omega$,
where $\Omega \subset [\dm_1]\times [\dm_2]$ is an index set of the observed entries.
Define the projection operator $\projomega {}:\reals^{\dm_1\times \dm_2} \rightarrow \reals^{\dm_1 \times \dm_2}$
as $[\projomega{}(A)]_{ij}= A_{ij}$ if $(i,j)\in \Omega$ and $0$ otherwise.

One popular recovery method for matrix completion, Nuclear Norm Minimization (NNM) \cite{candes2009exact},
imputes the missing entries by solving the SDP
\beq\label{mc} \tag{Matrix-Completion}
\ba{ll}
\mbox{minimize} & \|X\|_*\\
\mbox{subject to} & \projomega{}(X) = \projomega{}(\trux).\\
\ea
\eeq

A standard result in this literature \cite[Lemma 2]{fazel2002matrix}
represents the nuclear norm
by semidefinite-representable constraints on a lifted matrix
$\begin{bmatrix}
	X_1& X\\ X^\top & X_2
\end{bmatrix}$:
\begin{equation}
	\begin{aligned}\label{eq: nucsdpequip}
		\|X\|_*\leq t \iff \exists X_1,X_2 \;\text{such that}\; \begin{bmatrix}
			X_1& X\\ X^\top & X_2
		\end{bmatrix}\succeq 0, \tr(X_1)+\tr(X_2)\leq 2t.
	\end{aligned}
\end{equation}
Hence \eqref{mc} can be reformulated as
\beq
\ba{ll}\label{mclarge} \tag{SDP Matrix-Completion}
\mbox{minimize} & \tr(W_1)+\tr(W_2) \\
\mbox{subject to} & X_{ij} = (\trux)_{ij},\, (i,j)\in \Omega \\
& \begin{bmatrix}
	W_1 & X \\ X^\top & W_2
\end{bmatrix}\succeq 0,
\ea
\eeq
where $W_1,W_2,X$ are the decision variables. In particular, if $\xsol=\begin{bmatrix}
	W_1 & X\\
	X^\top & W_2
\end{bmatrix}$ solves \eqref{mclarge}, then $X$ solves \eqref{mc} using \eqref{eq: nucsdpequip}.

As is standard in this literature,
we measure the difficulty of the matrix completion problem by the incoherence $\mu$,
which we now define.
Let $\trux = U\Sigma V^\top$ be the SVD of $\trux$ with $U\in \reals^{\dm_1\times \trur},V\in \reals^{\dm_2\times \trur}$ having orthonormal columns and the diagonal matrix $\Sigma \in \reals^{\trur\times \trur}$ having positive entries on the diagonal. The incoherence $\inco$ is the smallest number that satisfies
\begin{equation}\label{eq: incoehrence}
	\begin{aligned}
		\max_{1\leq i\leq \dm_1} \twonorm{e_i^\top U}\leq \sqrt{\frac{\inco \trur}{\dm_1}}
		\qquad \text{and} \qquad
		\max_{1\leq i\leq \dm_2} \twonorm{e_i^\top V}\leq \sqrt{\frac{\inco \trur}{\dm_2}}.
	\end{aligned}
\end{equation}

If each entry of $\trux$ is observed independently with probability $p$ and $\trux$ is $\mu$-incoherent,
Problem \eqref{mc} has $\trux$ as its unique solution with high probability
when the observation probability $p$ exceeds a certain threshold.
A string of celebrated results have placed bounds on this threshold \cite{candes2010power,gross2011recovering,recht2011simpler,chen2015incoherence}.
The tightest bound available is $p>\frac{C\mu \trur \log(\inco \trur)\log(\max(\dm_1,\dm_2))}{\min(\dm_1,\dm_2)}$ for some large enough constant $C$ \cite{ding2018leave}.

If the \eqref{mclarge} is simple, then it is computationally tractable and gives a statistical robust estimator as
argued in the introduction. The strong duality, and the condition of surjective constraint map
of \eqref{mclarge} can be easily verified. Previous
work has established primal uniqueness, but not strict complementarity because the dual certificate is only approximate.
In this paper, we show that it also satisfies strict complementarity \emph{but has multiple dual solution.} Hence \eqref{mclarge}
is only primal \newcontent{simple.} Still, as discussed in the introduction, primal \newcontent{simplicity}  guarantees that
the recovery of $\xsol$ is not stymied by the optimization and measurement error.

\begin{thm}\label{thm: mc}
	Let $\dm_{\min} = \min \{\dm_1,\dm_2\}$ and $\dm_{\max} = \max \{\dm_1,\dm_2\}$. Suppose the ground truth rank $\trur$ matrix $\trux$ is $\mu$-incoherent, and each entry of it is observed with probability $p$ independently. If $p\geq C \frac{\log(\inco \trur)\trur \inco \log \dm_{\max} }{\dm_{\min}}$ for some large enough numerical constant $C>1$, then with probability at least $1-\dm_{\min}^{-c}$ for some numeric constant $c>0$, every item of the following holds
	\begin{enumerate}
		\item Problem \eqref{mclarge} is primal \newcontent{simple} and has a unique solution $\tXsol = \begin{bmatrix}
			U\Sigma U^\top &  \trux \\ \trux^\top & V\Sigma V^\top
		\end{bmatrix}$ with rank $\trur$.
		\item  It admits multiple dual solutions.
		\item It has a dual optimal solution $\tY_0$ strictly complementary to $\tXsol$ satisfying $\lambda_{n-\trur}(\tY_0)\geq \frac{3}{8}$.
	\end{enumerate}
\end{thm}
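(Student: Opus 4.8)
The plan is to reduce all three claims to the construction of a single \emph{exact} dual certificate and then read each conclusion off from it. First I would record the structural facts. Writing the lifted variable as $M=\begin{bmatrix}W_1 & X\\ X^\top & W_2\end{bmatrix}\in\sym^{\dm}$ with $\dm=\dm_1+\dm_2$, the objective equals $\inprod{I_\dm}{M}$, so the cost matrix is $I_\dm$; the constraints $\inprod{\tfrac12(e_ie_{\dm_1+j}^\top+e_{\dm_1+j}e_i^\top)}{M}=(\trux)_{ij}$, $(i,j)\in\Omega$, pick out distinct entries and are therefore linearly independent, giving \newcontent{the surjective constraint map property}. Strong duality holds by primal Slater's condition, witnessed by $\begin{bmatrix}tI & \trux\\ \trux^\top & tI\end{bmatrix}\succ0$ for $t>\opnorm{\trux}$. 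Representing a dual variable as a matrix $\Lambda$ supported on $\Omega$, the dual slack is $\tY=\begin{bmatrix}I_{\dm_1}& -\Lambda\\ -\Lambda^\top & I_{\dm_2}\end{bmatrix}$, which is dual feasible iff $\opnorm{\Lambda}\le1$ (Schur complement) and complementary to $\tXsol=\begin{bmatrix}U\\V\end{bmatrix}\Sigma\begin{bmatrix}U^\top & V^\top\end{bmatrix}$ iff $\tY\begin{bmatrix}U\\V\end{bmatrix}=0$, i.e. $\Lambda V=U$ and $\Lambda^\top U=V$. Letting $T$ be the tangent space to the rank-$\trur$ matrices at $\trux$, this pair is exactly $\mathcal{P}_T(\Lambda)=UV^\top$, while $\mathcal{P}_{T^\perp}(\Lambda)$ is constrained only by $\opnorm{\mathcal{P}_{T^\perp}(\Lambda)}\le1$.

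Granting a certificate $\Lambda$ supported on $\Omega$ with $\mathcal{P}_T(\Lambda)=UV^\top$ \emph{exactly} and $\opnorm{\mathcal{P}_{T^\perp}(\Lambda)}\le\tfrac58$, Claims 1 and 3 come for free. Since $UV^\top$ and $R:=\mathcal{P}_{T^\perp}(\Lambda)$ have orthogonal row and column spaces, the singular values of $\Lambda$ are the $\trur$ ones from $UV^\top$ together with those of $R$; hence $\opnorm{\Lambda}=1$ (so $\tY$ is feasible) and the eigenvalues of $\tY$ are $1\pm\sigma_i(\Lambda)$. Thus $\tY$ has exactly $\trur$ zero eigenvalues and $\lambda_{\dm-\trur}(\tY)=1-\opnorm{R}\ge\tfrac38$, which is strict complementarity with the quantitative bound of Claim 3. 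Primal uniqueness (Claim 1) then needs no further estimate: if $M^\star\succeq0$ is optimal, complementary slackness gives $\range(M^\star)\subseteq\ker(\tY)=\range\begin{bmatrix}U\\V\end{bmatrix}$, so $M^\star=\begin{bmatrix}U\\V\end{bmatrix}S\begin{bmatrix}U^\top & V^\top\end{bmatrix}$ for some $S\succeq0$; matching the off-diagonal block forces $USV^\top=\trux=U\Sigma V^\top$, hence $S=\Sigma$ and $M^\star=\tXsol$. With the structural facts above this proves primal \newcontent{simplicity}.

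Claim 2 (multiple dual solutions) I would settle by dimension counting. For any $\Delta$ supported on $\Omega$ with $\mathcal{P}_T(\Delta)=0$, the matrix $\Lambda+\Delta$ still satisfies $\mathcal{P}_T(\Lambda+\Delta)=UV^\top$, and because $\opnorm{\mathcal{P}_{T^\perp}(\Lambda)}\le\tfrac58<1$ every sufficiently small such $\Delta$ keeps $\opnorm{\Lambda+\Delta}\le1$; hence it is a second dual solution once $\Delta\neq0$. It therefore suffices to find a nonzero $\Delta\in\range(\projomega{})\cap T^\perp$. Under the stated sampling regime $\mathcal{P}_T\projomega{}\mathcal{P}_T$ is invertible on $T$, so $\mathcal{P}_T(\range(\projomega{}))=T$ and $\dim(\range(\projomega{})\cap T^\perp)=|\Omega|-\dim T\approx p\dm_1\dm_2-\trur(\dm_1+\dm_2)>0$, the strict inequality holding precisely because $p\ge C\frac{\log(\inco\trur)\trur\inco\log\dm_{\max}}{\dm_{\min}}$ makes $|\Omega|$ dominate $\dim T$. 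Any such $\Delta$ produces a distinct dual optimum, so the SDP is primal simple but not simple.

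The hard part, and the point at which earlier matrix-completion analyses stop, is producing the exact certificate with operator norm $\le\tfrac58$. The golfing and leave-one-out constructions of \cite{gross2011recovering,chen2015incoherence,ding2018leave} give $\Lambda_0$ supported on $\Omega$ with $\opnorm{\mathcal{P}_{T^\perp}(\Lambda_0)}\le\tfrac12$ but only $\mathcal{P}_T(\Lambda_0)=UV^\top-E$ for a residual $E\in T$ that is small in Frobenius norm --- enough for primal uniqueness but not for strict complementarity. To make $\mathcal{P}_T$ exact I would add the least-squares correction $\Lambda_1=\projomega{}(\mathcal{P}_T\projomega{}\mathcal{P}_T)^{-1}(E)$, which is supported on $\Omega$ and satisfies $\mathcal{P}_T(\Lambda_0+\Lambda_1)=UV^\top$, and then bound $\opnorm{\mathcal{P}_{T^\perp}(\Lambda_1)}\le\opnorm{\mathcal{P}_{T^\perp}\projomega{}(\mathcal{P}_T\projomega{}\mathcal{P}_T)^{-1}}\,\fronorm{E}$, forcing it below $\tfrac18$ via the incoherence bounds \eqref{eq: incoehrence} and the sampling probability $p$. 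The delicate steps are extracting a genuinely (exponentially) small $\fronorm{E}$ from golfing and converting this Frobenius control into an operator-norm bound on the correction; the leave-one-out analysis of \cite{ding2018leave} at the stated threshold on $p$ is what delivers $\opnorm{\mathcal{P}_{T^\perp}(\Lambda_0)}\le\tfrac12$ and a small enough $E$ simultaneously, with the claimed probability $1-\dm_{\min}^{-c}$.
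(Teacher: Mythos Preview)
Your overall strategy coincides with the paper's: build an \emph{exact} dual certificate $\Lambda$ supported on $\Omega$ with $\mathcal{P}_T(\Lambda)=UV^\top$ and $\opnorm{\mathcal{P}_{T^\perp}(\Lambda)}\le\tfrac58$, and then read off strict complementarity, the eigenvalue bound, primal uniqueness, and dual non-uniqueness. Even the certificate construction is the same in disguise: the paper runs golfing for $k_0-1$ independent rounds and then continues \emph{indefinitely} on the last sample block $\Omega_{k_0}$; since $\|\mathcal{P}_T(\Id-\tfrac1q\mathcal{P}_{\Omega_{k_0}})\mathcal{P}_T\|\le\tfrac14$, that infinite tail is precisely the Neumann series for your least-squares correction $(\mathcal{P}_T\mathcal{R}_{\Omega_{k_0}}\mathcal{P}_T)^{-1}$ applied to the residual. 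A few execution-level differences are worth noting. Your singular-value argument for $\lambda_{\dm-\trur}(\tY)\ge\tfrac38$ (exploiting that $UV^\top$ and $\mathcal{P}_{T^\perp}(\Lambda)$ have orthogonal row and column spaces, so $\sigma(\Lambda)=\{1,\ldots,1\}\cup\sigma(R)$) is slicker than the paper's direct quadratic-form computation. For dual non-uniqueness, you exhibit an explicit perturbation direction $\Delta\in\range(\projomega{})\cap T^\perp$ and use the slack $\opnorm{R}<1$; the paper instead invokes the Alizadeh--Haeberly--Overton necessary condition $\tfrac{(\dm-\trur)(\dm-\trur+1)}{2}\le\tfrac{\dm(\dm+1)}{2}-m$ and checks it fails because $m\gtrsim p\dm_1\dm_2$ is too large---same dimension count, different packaging.

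There is one genuine gap in your primal uniqueness step. From $\range(M^\star)\subseteq\range\begin{bmatrix}U\\V\end{bmatrix}$ you correctly get $M^\star=\begin{bmatrix}U\\V\end{bmatrix}S\begin{bmatrix}U^\top & V^\top\end{bmatrix}$, but ``matching the off-diagonal block'' does \emph{not} immediately force $USV^\top=\trux$: the linear constraints only pin down the entries in $\Omega$, so you only know $\projomega{}\big(U(S-\Sigma)V^\top\big)=0$. You need one more line: $U(S-\Sigma)V^\top\in T$, and $\projomega{}$ is injective on $T$ (this follows from the very invertibility of $\mathcal{P}_T\projomega{}\mathcal{P}_T$ you invoke for Claim~2), hence $S=\Sigma$. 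The paper sidesteps this by first citing nuclear-norm uniqueness for \eqref{mc} and then pinning down $W_1,W_2$ separately via complementarity; your route is more direct once this step is filled in.
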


The rest of the section is devoted to the proof of this theorem.
We assume $\dm_1=\dm_2=\dm$ to simplify the presentation.
The case for rectangular matrices $\dm_1 \ne \dm_2$ follows exactly the same reasoning.
\footnote{\newcontent{Primal and dual uniqueness
		and strict complementarity can be defined for \eqref{mc} directly instead
		of for the lifted version \eqref{mclarge}.
		However, the conclusions are the same for the lifted or standard versions:
		in Section \ref{sec: Primal regularity for original mc}, we show that primal uniqueness and strict complementarity
		still hold for \eqref{mc}, and \eqref{mc} has multiple dual solution with the same probability and assumptions as Theorem \ref{thm: mc}.
}}

\subsection{\newcontent{Surjective constraint map} and uniqueness of primal}
The surjective constraint map property is satisfied because the constraint map $\Amap$ has
\[
A_{i,j}=\frac{e_{i}e^\top _{\dm_2+j}+(e_{i}e^\top _{\dm_2+j})^\top }{2}, (i,j)\in \Omega,
\] which are orthogonal and hence linearly independent.

It has been proved that with high probability $\trux$ is the unique solution to \eqref{mc} \cite[Theorem 2]{ding2018leave}. Hence
the matrix
\begin{equation}
	\begin{aligned}\label{eq: xsolsol}
		\tXsol = \begin{bmatrix}
			U\Sigma U^\top  & U\Sigma V^\top  \\
			V\Sigma U^\top    & V\Sigma V^\top
		\end{bmatrix}= \begin{bmatrix}
			U \\
			V
		\end{bmatrix} \Sigma \begin{bmatrix}
			U^\top  & V^\top
		\end{bmatrix}\succeq 0
	\end{aligned}
\end{equation}
is a solution to \eqref{mclarge} and has rank equal to $\rsol$. Moreover, for any solution $\tX$ of \eqref{mclarge}, because $\trux$ is the unique
solution of \eqref{mc}, it must be of the form
\[
\tX = \begin{bmatrix}
	{W}_1 & \trux \\
	\trux ^\top & {W}_2
\end{bmatrix}.
\] Since the objective value should be equal for $\tX$ and $\tXsol$: $\tr(\tX) =\tr(\tXsol) =2\nucnorm{\trux}$,
we must have $\tX = \tXsol$.
We prove this formally in Lemma \ref{lem: uniqueminnuc} using complementarity.
Hence the primal solution to \eqref{mclarge} is unique.

\subsection{Strong duality and strict complementarity}
In this section, we will construct a dual optimal solution
to assert strong duality and strict complementarity by using the following lemma
(proved in \ref{sec: verifyAssump1}):
\begin{lemma}\label{lem: Y0construction}
	Under the setting of Theorem \ref{thm: mc},  with probability at least $1-n_{\min}^{-c}$, there exists a $Y_0\in \reals^{\dm\times \dm}$ such that
	(1) $\projomega{}(Y_0) = Y_0$,
	(2) $\projt (Y_0) =UV^\top$, and
	(3) $ \opnorm{\projto(Y_0)}\leq \frac{5}{8}$.
\end{lemma}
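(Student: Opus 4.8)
The plan is to produce $Y_0$ as the \emph{exact} dual certificate for nuclear norm minimization by solving a least squares problem over the tangent space, and then to bound its component in $T^\perp$ using concentration inequalities for the random sampling operator. Write $P_U = UU^\top$ and $P_V = VV^\top$, so that $\projt(Z) = P_U Z + Z P_V - P_U Z P_V$ and $\projto(Z) = (I-P_U)Z(I-P_V)$. Introduce the restricted sampling operator $\mathcal M = \tfrac1p\,\projt\projomega{}\projt$ regarded as a map from $T$ to $T$. The first step is to invoke the standard matrix Bernstein estimate \cite{recht2011simpler, chen2015incoherence}: under the stated lower bound on $p$, with probability at least $1-\dm_{\min}^{-c}$ the operator obeys $\|\mathcal I_T - \mathcal M\|_{T\to T}\le \tfrac12$ in the Frobenius operator norm on $T$, so $\mathcal M$ is invertible with $\|\mathcal M^{-1}\|_{T\to T}\le 2$.

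Next, define
\[
W := \mathcal M^{-1}(UV^\top)\in T, \qquad Y_0 := \tfrac1p\,\projomega{}\projt\,(W).
\]
Property (1), $\projomega{}(Y_0)=Y_0$, is immediate because $Y_0$ lies in the range of $\projomega{}$. Property (2) holds by construction, since $\projt(Y_0) = \tfrac1p\,\projt\projomega{}\projt\,(W) = \mathcal M(W) = UV^\top$. For property (3), using $W\in T$ (so $\projt W = W$) together with $\projto\projt = 0$,
\[
\projto(Y_0) = \projto\!\Big(\tfrac1p\,\projomega{} - \mathcal I\Big)(W),
\qquad\text{hence}\qquad
\opnorm{\projto(Y_0)} \le \opnorm{\Big(\tfrac1p\,\projomega{}-\mathcal I\Big)(W)}.
\]

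The crux is therefore to bound $\opnorm{(\tfrac1p\projomega{}-\mathcal I)(W)}$ by $\tfrac58$. Applying a matrix Bernstein inequality to the centered random operator $\tfrac1p\projomega{}-\mathcal I$ evaluated at the fixed matrix $W$ gives a bound of order $\big(\sqrt{\tfrac{\dm\log\dm}{p}} + \tfrac{\dm\log\dm}{p}\big)\,\infnorm{W}$, so it suffices to control $\infnorm{W}=\infnorm{\mathcal M^{-1}(UV^\top)}$. Incoherence \eqref{eq: incoehrence} supplies $\infnorm{UV^\top}\le \tfrac{\inco\trur}{\dm}$, so the remaining task is to show that $\mathcal M^{-1}$ does not inflate the $\infnorm{\cdot}$ norm by more than a constant factor, yielding $\infnorm{W}\lesssim \tfrac{\inco\trur}{\dm}$.

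I expect this $\ell_\infty$-to-$\ell_\infty$ control of $\mathcal M^{-1}$ to be the main obstacle, since at the near-optimal threshold $p\gtrsim \tfrac{\inco\trur\log(\inco\trur)\log\dm}{\dm}$ the operator $\tfrac1p\projomega{}-\mathcal I$ is \emph{not} small in operator norm on all of $T$. To overcome it I would expand $\mathcal M^{-1}$ through the Neumann series $\sum_{k\ge 0}(\mathcal I_T-\mathcal M)^k$ realized via independent sampling batches (the golfing decomposition), establishing a geometric contraction $\infnorm{(\mathcal I_T-\mathcal M)(Z)}\le \tfrac12\infnorm{Z}$ on each batch, or alternatively import the sharp incoherence-preserving bounds obtained by the leave-one-out analysis of \cite{ding2018leave}. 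Combining the resulting $\infnorm{W}$ estimate with the Bernstein bound above, and choosing the constant $C$ in the sampling threshold large enough that the operator-norm estimate falls below $\tfrac58$, completes the verification of (3); note that $\opnorm{\projto(Y_0)}\le\tfrac58$ is exactly the slack needed to produce the eigenvalue gap $\lambda_{\dm-\trur}(\tY_0)\ge \tfrac38$ claimed in Theorem \ref{thm: mc}.
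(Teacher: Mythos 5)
Your construction gets properties (1) and (2) essentially for free, but your verification of (3) has a genuine gap: the matrix $W=\mathcal{M}^{-1}(UV^\top)$ is \emph{not} a fixed matrix. It is a random matrix built from the very same sampled index set $\Omega$ that defines the operator $\tfrac1p\projomega{}-\Id$, so you cannot apply matrix Bernstein ``to the centered random operator evaluated at the fixed matrix $W$'' --- the operator and the matrix are statistically dependent, and conditioning on $W$ destroys the sampling distribution. This dependence is precisely what makes the exact least-squares certificate hard to analyze: the original analysis of this certificate (Cand\`es--Recht) expands $\mathcal{M}^{-1}$ as a Neumann series in the \emph{same} randomness and controls each term with decoupling and moment arguments, and even then it only succeeds at sampling rates well above the threshold $p\geq C\inco\trur\log(\inco\trur)\log\dm_{\max}/\dm_{\min}$ assumed in Theorem \ref{thm: mc}. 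Your proposed repair does not close the gap either: once you realize the Neumann series ``via independent sampling batches,'' you are no longer inverting $\mathcal{M}$ --- you are running the golfing scheme, and a golfing certificate built from finitely many batches satisfies $\projt(Y_0)\approx UV^\top$ only approximately. But this lemma needs the equality in (2) \emph{exactly}: it is what makes $\tY_0$ exactly dual optimal (via $2\tr(\trux Y_0)=2\tr(\Sigma)$) and exactly annihilate $\range\bigl(\begin{bmatrix} U^\top & V^\top\end{bmatrix}^\top\bigr)$, so an approximate identity is not enough. In short: keep exactness and you face the dependence problem; gain independence and you lose exactness.

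The paper threads this needle differently, following \cite{ding2018leave}. It golfs with $k_0=C_0\log(\inco\trur)$ independent batches, which handles the dependence issue because each iterate $W^{t-1}$ is independent of the fresh batch $\ob_t$; but then, instead of stopping, it reuses the last batch $\ob_{k_0}$ \emph{infinitely many times}, setting $Z^{t}=\projt\hpk{k_0}(Z^{t-1})$ and $Y=Y_1+Y_2+Y_3$ with $Y_3$ an infinite tail series. Since $\opnorm{\projt\hpk{k_0}\projt}\leq\tfrac14$ with high probability, the residual contracts geometrically, the series converges, and in the limit $\projt(Y)=UV^\top$ holds \emph{exactly} --- recovering your property (2) without sacrificing independence where it matters. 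The reused batch makes the tail terms dependent on one another, but by iteration $\To=2\log\dm+2$ the residual obeys $\fronorm{Z^{\To}}\leq\tfrac{1}{4\dm}$, so the crude bound $\opnorm{\projto(Y_3)}\leq\fronorm{Y_3}\leq\tfrac{1}{2p\dm}<\tfrac18$ suffices, while the independent-batch part gives $\opnorm{\projto(Y_1+Y_2)}\leq\tfrac12$; together these yield the $\tfrac58$ in (3). If you want to salvage your write-up, the shortest path is to replace the exact inverse $\mathcal{M}^{-1}$ with this reuse-the-last-batch construction; establishing the $\ell_\infty$ control of $\mathcal{M}^{-1}(UV^\top)$ jointly with the required independence at the stated sampling rate is not a known-to-work route.
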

The operator $\projt$ is the projection to the linear space $\mathcal{T}\subset\reals^{n^2}$ consisting of matrices with columns in $\range(U)$ or rows in $\range(V)$. The projection $\projt$ can be written explicitly as $\projt{Z}=UU^\top Z+ZVV^\top -UU^\top ZVV^\top$ for any $Z\in \reals^{n^2}$. The projection $\projto(Z)=Z-\projt(Z)$ is the projection on to the subspace orthogonal to $\mathcal{T}$.

Let us write down the dual of \eqref{mclarge} with variable $y_{ij},\,(i,j)\in \Omega$ for the purpose of constructing a dual solution:
\beq
\ba{ll}\label{mclargedual} 
\mbox{maximize} & \sum_{(i,j)\in \Omega} y_{ij}(\trux)_{ij} \\
\mbox{subject to} & I - \sum_{(i,j)\in \Omega} \frac{e_{i}e_{n+j}^\top +e_{n+j}e_{i}^\top }{2}y_{ij}\succeq 0.
\ea
\eeq
By introducing a variable $\tilde{Y}=\begin{bmatrix}
	0& Y\\
	Y^\top  & 0
\end{bmatrix} =\sum_{(i,j)\in \Omega} \frac{e_{i}e_{n+j}^\top +e_{n+j}e_{i}^\top }{2}y_{ij}\in \reals^{2n\times 2n}$ and $Y\in \reals^{n^2}$, the dual problem \eqref{mclargedual} is equivalent to
\beq
\ba{ll}\label{mclargedualmatrix} 
\mbox{maximize} & 2\inprod{\trux}{Y} \\
\mbox{subject to} & I - \tY \succeq 0\\
& \projomega{}(Y) = Y.
\ea
\eeq
with decision variable $\tY = \begin{bmatrix}
	0& Y\\
	Y^\top  & 0
\end{bmatrix}$.
We work with $\tY$ instead of $[y_{ij}]_{(i,j)\in \Omega}$ because working with matrices is more convenient.
We claim the dual matrix
\[
\tY_0 =\begin{bmatrix}
	0& Y_0\\
	Y_0^\top & 0
\end{bmatrix}
\]
solves the dual problem \eqref{mclargedualmatrix}. Our derivation will also show
strong duality and strict complementarity of \eqref{opt: tracemc}. We first verify that $\tY_0$ is feasible for \eqref{opt: dualtracemc}.
\paragraph{Linear feasibility:} This is due to $\projomega{}(Y_0) = Y_0$ by assumption on $Y_0$.
\paragraph{PSD feasibility and strict complementarity:} Take any $w=\begin{bmatrix}
	u \\ v
\end{bmatrix} \in  \range\left(\begin{bmatrix}
	U \\ V
\end{bmatrix}\right)$. As $\frac{1}{2} \begin{bmatrix}
	U \\ V
\end{bmatrix}[U^\top,V^\top]$ is the projection matrix to $\range\left(\begin{bmatrix}
	U \\ V
\end{bmatrix}\right)$, we have
\[
w = \left(\frac{1}{2} \begin{bmatrix}
	U \\ V
\end{bmatrix}[U^\top,V^\top ]\right) w.
\]
By expanding the righthand side of the above equality,
we reach
$
w = \begin{bmatrix}
	UV^\top v \\ VU^\top u
\end{bmatrix}.
$ Using this fact and the definition of $\mathcal{T}^\perp$, we have
\[
(I-\tY_0)w =  w - \begin{bmatrix}
	UV^\top v \\ VU^\top u
\end{bmatrix} - \begin{bmatrix}
	\projto(Y_0)v \\
	\left[\projto(Y_0)\right]^\top u
\end{bmatrix} =0.
\]
Thus the null space of $I-\tilde{Y}_0$ contains  $\range\left(\begin{bmatrix}
	U \\ V
\end{bmatrix}\right)$.
Now take any $z=\begin{bmatrix}
	z_1 \\ z_2
\end{bmatrix} \perp \range\left(\begin{bmatrix}
	U \\ V
\end{bmatrix}\right)$, then $U^\top z_1 + V^\top z_2 = 0$. Thus the quadratic form
$z^\top(I-\tY_0)z$ satisfies
\begin{equation}
	\begin{aligned}\label{eq: strictcomplmcderive}
		z^\top(I-\tY_0)z &= \twonorm{z_1}^2 +\twonorm{z_2}^2-2z_2^\top VU^\top z_1-2z_2^\top \projto(Y_0)z_1 \\
		&\overset{(a)}{\geq} \twonorm{z_1}^2 +\twonorm{z_2}^2+2z_2^\top VV^\top z_2-2\twonorm{z_1}\opnorm{\projto(Y_0)}\twonorm{z_2}\\
		&\overset{(b)}{\geq} \twonorm{z_1}^2 +\twonorm{z_2}^2-\frac{5}{4}\twonorm{z_1}\twonorm{z_2}\geq \frac{3}{8}(\twonorm{z_1}^2 +\twonorm{z_2}^2),
	\end{aligned}
\end{equation}
where step $(a)$ is due to the fact $U^\top z_1 + V^\top z_2 = 0$ and step $(b)$ is because of $\opnorm{\projto(Y_0)}\leq \frac{5}{8}$. We hence have shown that $I-\tilde{Y}_0$ is PSD when restricted to the space orthogonal to $ \range\left(\begin{bmatrix}
	U \\ V
\end{bmatrix}\right)$. To conclude  $I-\tilde{Y}_0$ is PSD, we recall the null space of $I-\tilde{Y}_0$ contains  $\range \left(
\begin{bmatrix}
	U \\ V
\end{bmatrix}\right)$. Note that strict complementarity is satisfied as $\tilde{Y}$ is optimal to \eqref{mclargedual} as shown in the next paragraph:
\begin{equation}\label{eq: strictcomplmc}
	\begin{aligned}
		\rank(I-\tY_0)=n-\trur, \quad \text{and} \quad \lambda_{n-\trur}(I-\tY_0)\geq \frac{3}{8}.
	\end{aligned}
\end{equation}
\paragraph{Optimality and strong duality:} the objective value of $\tY_0$ and $\tXsol$ satisfy
\begin{equation}
	\begin{aligned}
		2\tr(\trux Y_0)\overset{(a)}{=}2\tr(\trux \projt(Y_0))\overset{(b)}{=}2\tr(V\Sigma U^\top UV^\top )=2\tr(\Sigma)\overset{(c)}{=}\tr(\tXsol).
	\end{aligned}
\end{equation}
Here step $(a)$ uses the fact that $\trux \in \mathcal{T}$. In step $(b)$, we use the fact  $\projt(Y_0)=UV$ in Lemma \ref{lem: Y0construction}. Step $(c)$ uses the form of $\tXsol$ in \eqref{eq: xsolsol}.
Thus we see $I-\tY_0$ is indeed dual optimal and satisfies strong duality.

\subsection{Multiple dual solutions}\label{sec: Multiple dual solutions}
To establish the fact that the dual has multiple solutions,
let us introduce the following lemma concerning the uniqueness of the dual.
\begin{lemma}\cite[Theorem 6, 7, and 11]{alizadeh1997complementarity}\label{lem: uniquenondegenerate}
	Suppose the primal SDP \eqref{p} is primal simple
	\footnote{We note that the results in \cite{alizadeh1997complementarity} require
		either the primal \eqref{p} or the dual \eqref{d} satisfies
		the Slater's condition. Since this condition is only used to ensure $\pval = \dval$,
		which is covered by our definition
		of strong duality, Slater's condition is no longer required.}.
	A necessary condition for the dual to be unique is $\frac{(n-\rsol)(n-\rsol+1)}{2}\leq \frac{n(n+1)}{2}-m$.
\end{lemma}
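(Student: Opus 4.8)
The plan is to reduce dual uniqueness to a statement about \emph{primal nondegeneracy} and then read off the claimed inequality by a pure dimension count. Because the primal SDP is primal simple, it satisfies strict complementarity and has a unique primal solution $\xsol$ of rank $\rsol$; suppose in addition that the dual solution is unique. The first step is to invoke \cite[Theorem 11]{alizadeh1997complementarity}: under strict complementarity, dual uniqueness is \emph{equivalent} to primal nondegeneracy of $\xsol$ (the forward implication, primal nondegeneracy $\Rightarrow$ dual uniqueness, is \cite[Theorem 7]{alizadeh1997complementarity}, and strict complementarity supplies the converse). Hence our hypotheses force $\xsol$ to be primal nondegenerate, and it remains only to turn primal nondegeneracy into an inequality on $m$.

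The second step is to make primal nondegeneracy concrete. Let $V_1\in\reals^{\dm\times\rsol}$ and $V_2\in\reals^{\dm\times(\dm-\rsol)}$ have columns forming orthonormal bases of $\range(\xsol)$ and $\ker(\xsol)$. Following \cite[Theorem 6, 7]{alizadeh1997complementarity}, primal nondegeneracy of $\xsol$ is equivalent to the linear independence of the $m$ symmetric matrices
\[
M_i \;=\; \begin{bmatrix} V_1^\top A_i V_1 & V_1^\top A_i V_2 \\ V_2^\top A_i V_1 & 0 \end{bmatrix},\qquad i=1,\dots,m,
\]
regarded as elements of the subspace $\mathcal{V}\subset\sym^\dm$ of symmetric matrices whose lower-right $(\dm-\rsol)\times(\dm-\rsol)$ block vanishes. (These are exactly the matrices whose vectorizations form the columns of the operator $\Amap^*_{\xsol}$ used for the numerical verification above; linear independence of the $M_i$ is precisely the full-column-rank condition stated there.)

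The final step is the dimension count. The ambient space $\sym^\dm$ has dimension $\tfrac{\dm(\dm+1)}{2}$, and forcing the lower-right $(\dm-\rsol)\times(\dm-\rsol)$ block to vanish removes $\tfrac{(\dm-\rsol)(\dm-\rsol+1)}{2}$ of these degrees of freedom, so
\[
\dim\mathcal{V} \;=\; \frac{\dm(\dm+1)}{2}-\frac{(\dm-\rsol)(\dm-\rsol+1)}{2}.
\]
Since $m$ linearly independent vectors cannot lie in a space of dimension smaller than $m$, primal nondegeneracy forces $m\le\dim\mathcal{V}$; rearranging yields exactly $\tfrac{(\dm-\rsol)(\dm-\rsol+1)}{2}\le\tfrac{\dm(\dm+1)}{2}-m$, as claimed.

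The individual steps are short, so the effort is mainly bookkeeping, and the one place I expect to have to be careful is the \emph{direction} of the nondegeneracy condition: primal nondegeneracy is an injectivity (linear-independence) statement about the $M_i$, hence an \emph{upper} bound on $m$, and it is easy to confuse it with the dual-nondegeneracy / spanning condition that instead bounds $m$ from below. I would also double-check that strict complementarity, which primal simplicity hands us for free, is precisely the hypothesis needed to run \cite[Theorem 11]{alizadeh1997complementarity} in the converse direction, so that dual uniqueness may legitimately be \emph{upgraded} to primal nondegeneracy rather than merely being implied by it.
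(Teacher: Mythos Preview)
The paper does not supply its own proof of this lemma: it is stated with a citation to \cite[Theorems 6, 7, and 11]{alizadeh1997complementarity} and then used immediately in Section~\ref{sec: Multiple dual solutions} without further argument. Your proposal is therefore not competing with a proof in the paper but rather reconstructing the argument behind the citation, and your reconstruction is correct. The logical chain---strict complementarity plus dual uniqueness forces primal nondegeneracy via \cite[Theorem 11]{alizadeh1997complementarity}; primal nondegeneracy is the linear independence of the projected constraint matrices $M_i$ in the subspace $\mathcal{V}$ via \cite[Theorem 6]{alizadeh1997complementarity}; and $m$ independent vectors in $\mathcal{V}$ forces $m\le\dim\mathcal{V}=\tfrac{n(n+1)}{2}-\tfrac{(n-\rsol)(n-\rsol+1)}{2}$---is exactly the intended reading of the three cited theorems, and matches the characterization of primal nondegeneracy the paper itself uses in the numerical verification (the full-column-rank condition on $\Amap^*_{\xsol}$). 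Your caution about the direction of the inequality is well placed and correctly resolved.
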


Using the necessary condition from Lemma \ref{lem: uniquenondegenerate}, we have the dual is not unique unless
\begin{equation}
	\begin{aligned} \label{eq: mcdualuniquenecessarycondition}
		\frac{(2\dm-\rsol) (2\dm-\rsol +1)}{2}\leq \frac{2\dm(2\dm+1)}{2}-m \iff  \frac{2n\rsol -(\rsol)^2 +\rsol}{2}\geq m.
	\end{aligned}
\end{equation}
However, since $m\geq \frac{1}{2}p\dm^2 \geq  C\dm \trur\inco\log (\trur\inco) \log \dm $ with probability at least $1-\dm^{-2}$ for some large
constant $C>1$ and $\rsol=\trur$ (as $\tXsol$ has rank $\trur$), we see \eqref{eq: mcdualuniquenecessarycondition} cannot be satisfied for any $n\geq 1$ and hence the dual is not unique.

\subsection{Numerical verification of multiple dual solution}
In this section, we demonstrate numerically that
the matrix completion problem \eqref{mclarge} indeed admits multiple dual solutions
by constructing two dual solutions.
The problem instance we consider is the matrix completion problem with $\dm = \dm_1=\dm_2=50$.
The original matrix $\trux$ is generated randomly with rank $\trur=2$.
We set the observation probability $p$ to be $p=3 \trur \log(\dm)/\dm$.
We compute the primal solution $\xsol$ of \eqref{mclarge} using the SDPT3 solver \cite{toh1999sdpt3} and find it
is exactly $\tXsol$.

Now we demonstrate the multiplicity of the dual solution by constructing several dual solutions.
Let $U\in \reals^{2\dm \times (2\dm - \rsol)}$ have columns that form
an orthonormal basis for $\nullspace(\tXsol)$.
Recall $\xsol$ has rank $\rsol = \rank(\tXsol)=\trur$.
For a given cost matrix $C\in \sym^{2\dm-\rsol}$,
we solve the problem
\beq
\ba{ll}\label{mcTestDualuniqueness}
\mbox{maximize} & \inprod{C}{Z_s} \\
\mbox{subject to} & Z_s \succeq 0\\
& UZ_sU^\top = I - \sum_{(i,j)\in \Omega} \frac{e_{i}e_{n+j}^\top+e_{n+j}e_{i}^\top}{2}y_{ij}\\
\ea
\eeq
with decision variable $Z_s \in \sym ^{2\dm -\rsol}$ and $y\in \reals^{m}$
We solve this problem twice, with (i) $C =I$ and (ii) $C$ having iid standard Gaussian entries.

Denote the solution of \eqref{mcTestDualuniqueness} as $Z_{s,C}$ for each different $C$.
By construction, the matrix $Z_{C}= UZ_{s,C}U^\top$ is dual optimal as it is feasible and $\tr(Z_{C}\xsol)=0$.

We plot the spectrum of $Z_C$ in Figure \ref{fig:matrixcompletionmultipledual}.
We see the spectra are quite different; clearly these two dual solutions are not the same!
\begin{figure}
	\centering
	\includegraphics[width=0.8\linewidth]{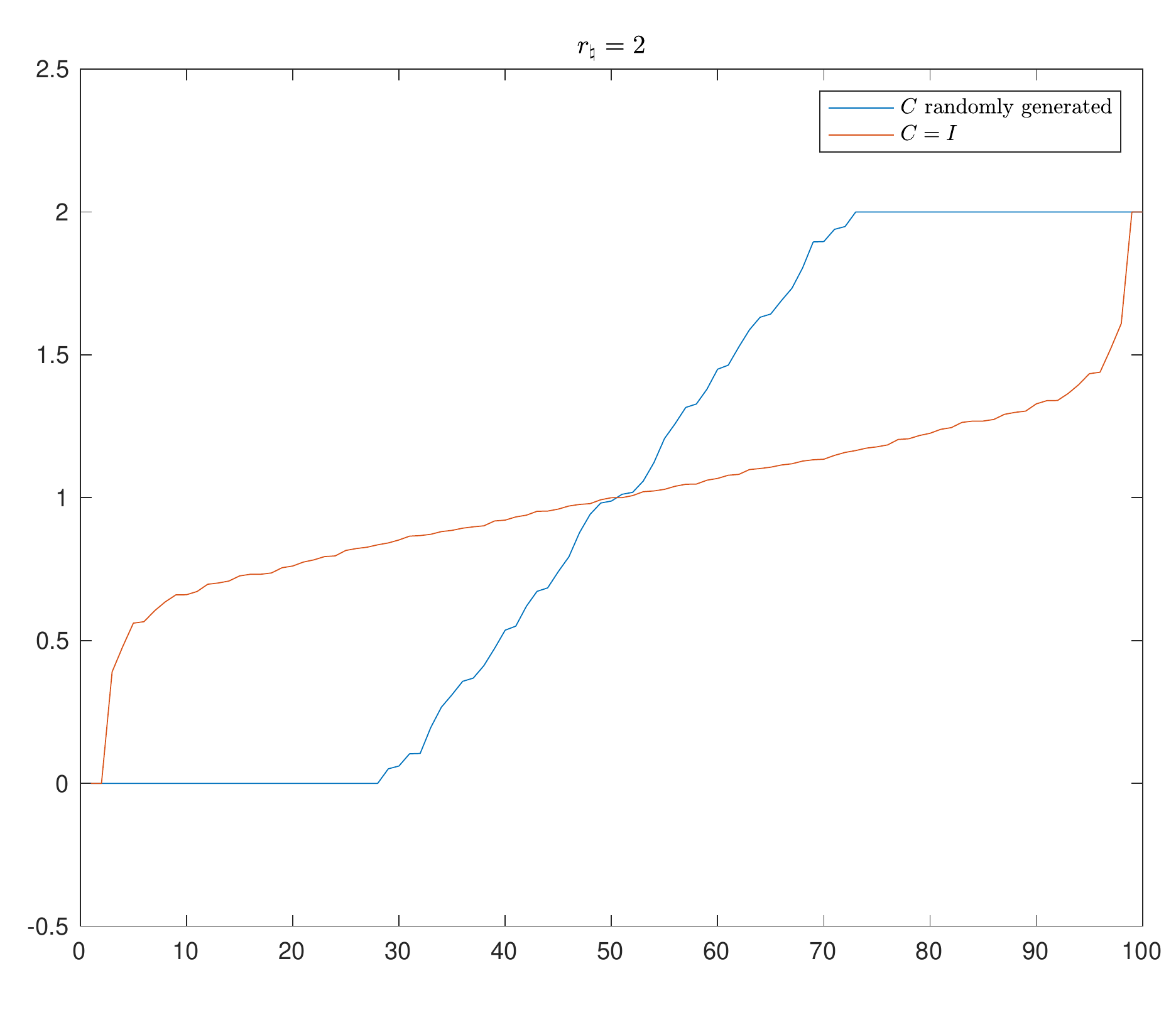}
	\caption{The eigenvalues of different dual solution $Z_{C}$.}
	\label{fig:matrixcompletionmultipledual}
\end{figure}

\newcontent{\section{Discussion and conclusion}
	In this note, we have shown that generic SDPs are simple,
	and that many structured low rank matrix recovery problems are also simple.
	Building on the framework established here,
	an important future direction is to understand whether most SDPs
	with linear inequality constraints are simple.
	These inequality constraints can be embedded into the standard form SDP
	presented here, but this embedding can often lead to a problematic increase in the dimensionality of the problem
	and give too much degree of freedom in the dual space.
	For example, one important special case concerns entrywise constraints of $X$
	such as the nonnegativity constraint $X\geq 0$.
	
	We conjecture that primal \newcontent{simplicity}  continues to hold for SDP applications in statistics and signal processing, even in the presence of nonnegativity constraints.
	Intuitively, in this applications, 
	we expect the optimal solution to coincide with the underlying signal, which means the optimal solution is likely to be unique.
	One common way to prove primal uniqueness for these problems is to
	first prove strict complementarity holds;
	see e.g. \cite[Equation (1.5) and Section 2]{li2018convex}.\footnote{Do note that the term of strict 
		complementarity is not explicitly specified.}
	
	On the other hand, we expect the dual solution of these problems not to be unique,
	as the number of constraints, which is the number of measurements,
	is usually a bit larger than the intrinsic dimension.
	(For example, in matrix completion,
	the intrinsic dimension of a rank $r$ matrix of size $n\times n$ is $\bigO{nr}$
	but the number of measurements needs to be greater by a log factors 
	as shown in \cref{thm: mc} which is $pn^2 = \bigO{nr\mu \log (\mu r)\log n}$).)
	The excess of constraints seems necessary to ensure successful recovery with high probability,
	but destroys dual uniqueness as there is more freedom in the dual.
	
	As a first step towards handling problems with more constraints,
	consider a simple SDP arising from community
	detection \cite[Equation (1.5)]{li2018convex},
	which is a Max-Cut SDP with an additional nonnegativity constraint
	$X\geq 0$ and admits a unique completely positive primal solution.
	As shown in
	\cite[section 2]{li2018convex}, it is primal simple but not 
	simple
	(with the appropriate generalization of simplicity and 
	primal simplicity to problems
	with inequality constraints). We leave the exact details to future work.
}

\appendix
\section{Definition of stationary points for \eqref{bm}}\label{sec: appendixBMstationary}
We define second order stationary points formally below:
\begin{definition}
	Suppose $\mathcal{M}_r = \{F\mid  \mathcal{A}(FF^\top) = b\}$ is a Riemannian manifold equipped
	with the trace inner product of $\reals^{\dm \times p}$. Denote the tangent space of
	any $F\in \mathcal{M}_r$ as $T_F\mathcal{M}_r\subset \reals ^{n\times r}$. A point
	$F\in \reals^{\dm \times r}$ is a second order stationary point if the following two conditions are
	satisfied:
	\begin{itemize}
		\item  	the Riemannian gradient $\grad f(F) \in T_F\mathcal{M}_r$ satisfies
		$\grad f(F) =0  $
		\item the Riemannian Hessian $\hess f(F)$ is a positive semidefinite symmetric linear
		map from $T_F\mathcal{M}_r$ to $T_F\mathcal{M}_r$.
	\end{itemize}
\end{definition}
Informally, we can see the conditions required of second order stationary points for \eqref{bm}
match the conditions required in the unconstrained case.
Ideas from Riemmanian optimization makes the gradient and Hessian in the constrained setting precise and rigorous. 
We refer the reader to \cite{absil2009optimization} for the general definition of Riemannian gradient and Hessian for smooth functions defined
on Riemmannian manifold. For the condition guaranteeing $\mathcal{M}_r$ being a Riemannian manifold, the detailed formula of the tangent space
$T_F\mathcal{M}_r$, the $\grad f(F)$, and $\hess f(F)$, see \cite[Assumption 1.1, Equations (3), (7), and (10)]{boumal2018deterministic} respectively.
\section{Lemma for Section \ref{sec: mc}}\label{sec: apformc}
\begin{lemma} \label{lem: uniqueminnuc} Fix a rank $\trur$ matrix $\trux \in \reals^{\dm_1\times \dm_2}$ with singular value
	decomposition $X= U\Sigma V^\top$. Here $U\in \reals^{\dm_1\times \trur}$, $V\in \reals^{\dm_2\times \trur}$ have orthonormal columns and $\Sigma\in \sym^{\trur}$ is diagonal.
	The optimization problem with decision variable $W_1\in \sym ^{\dm_1}$ and $W_2\in \sym ^{\dm_2}$
	\beq \label{opt: tracemc}
	\ba{ll}
	\mbox{minimize} & \tr(W_1)+\tr(W_2)\\
	\mbox{subject to} & \begin{bmatrix}
		W_1 & \trux \\
		\trux ^\top & W_2
	\end{bmatrix}\succeq 0
	\ea
	\eeq
	has a unique solution $W_1 =U\Sigma U^\top$ and $W_2 = V\Sigma V^\top$ with optimal value $2\nucnorm{\trux}$.
\end{lemma}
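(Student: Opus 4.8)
The plan is to exhibit a dual certificate, establish strict complementarity, and then extract uniqueness from complementary slackness — exactly the strategy used for \eqref{mclarge} in the body. First I would record that the optimal value of \eqref{opt: tracemc} equals $2\nucnorm{\trux}$: this is immediate from the semidefinite representation \eqref{eq: nucsdpequip}, and the candidate pair $(U\Sigma U^\top, V\Sigma V^\top)$ attains it, since the associated block matrix factors as $\tXsol=\begin{bmatrix} U\\ V\end{bmatrix}\Sigma\begin{bmatrix} U^\top & V^\top\end{bmatrix}\succeq 0$ and has trace $2\tr(\Sigma)=2\nucnorm{\trux}$. So only uniqueness of the minimizer remains.

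Next I would build the dual. Attaching a multiplier $S=\begin{bmatrix} S_{11} & S_{12}\\ S_{12}^\top & S_{22}\end{bmatrix}\succeq 0$ to the PSD constraint, the coefficient of $W_1$ in the Lagrangian is $I-S_{11}$ and that of $W_2$ is $I-S_{22}$; since $W_1,W_2$ range over all symmetric matrices, minimizing forces $S_{11}=I$ and $S_{22}=I$, leaving the dual $\max_{\opnorm{S_{12}}\le 1}\,-2\inprod{S_{12}}{\trux}$. I would take $S_{12}=-UV^\top$, whose dual value is $2\inprod{UV^\top}{U\Sigma V^\top}=2\tr(\Sigma)$, matching the primal value and hence certifying strong duality. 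Feasibility $S\succeq 0$ follows from the Schur complement $I-(UV^\top)^\top UV^\top=I-VV^\top\succeq 0$, equivalently $\opnorm{UV^\top}\le 1$.

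The crux is strict complementarity. A direct computation gives $\ker(S)=\range\begin{bmatrix} U\\ V\end{bmatrix}$, a subspace of dimension $\trur$: from $S\begin{bmatrix} u\\ v\end{bmatrix}=0$ one gets $u=UV^\top v$ and $(I-VV^\top)v=0$, forcing $\begin{bmatrix} u\\ v\end{bmatrix}=\begin{bmatrix} U\\ V\end{bmatrix}\xi$. Hence $\rank(S)=n_1+n_2-\trur$ while $\rank(\tXsol)=\trur$, so $\rank(S)+\rank(\tXsol)=n_1+n_2$. Uniqueness now follows: for any optimal $M=\begin{bmatrix} W_1 & \trux\\ \trux^\top & W_2\end{bmatrix}$, strong duality gives $\inprod{S}{M}=0$, so $SM=0$ and $\range(M)\subseteq\ker(S)=\range\begin{bmatrix} U\\ V\end{bmatrix}$. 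Writing $M=\begin{bmatrix} U\\ V\end{bmatrix}R\begin{bmatrix} U^\top & V^\top\end{bmatrix}$ with $R\in\sym^{\trur}$, $R\succeq 0$, and matching the off-diagonal block $URV^\top=\trux=U\Sigma V^\top$ forces $R=\Sigma$ (as $U,V$ have orthonormal columns), giving $W_1=U\Sigma U^\top$ and $W_2=V\Sigma V^\top$.

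The main obstacle I anticipate is the verification of the certificate: confirming both $S\succeq 0$ and the exact kernel dimension $\trur$, because it is strict complementarity (and not merely dual optimality) that lets complementary slackness pin down $M$ itself rather than only its range. The remaining steps — the factorization of $\tXsol$, the Lagrangian reduction, and matching the off-diagonal block — are routine linear algebra.
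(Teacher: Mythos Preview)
Your proof is correct and follows essentially the paper's approach: the same dual certificate $S_{12}=-UV^\top$, the same verification of optimality, and the same complementary-slackness identity $SM=0$ to force uniqueness. The paper's final extraction is marginally more direct --- it simply reads $\bar W_1=U\Sigma U^\top$ and $\bar W_2=V\Sigma V^\top$ from the $(1,1)$ and $(2,2)$ blocks of the product $SM$, bypassing the explicit kernel computation and the parametrization $M=\begin{bmatrix}U\\V\end{bmatrix}R\begin{bmatrix}U^\top & V^\top\end{bmatrix}$ that you use.
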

\begin{proof}
	The dual problem of Problem \eqref{opt: tracemc} is simply
	\beq \label{opt: dualtracemc}
	\ba{ll}
	\mbox{maximize} & -2\tr(\trux ^\top Z)\\
	\mbox{subject to} & \begin{bmatrix}
		I  & Z\\
		Z^\top & I
	\end{bmatrix}\succeq 0,
	\ea
	\eeq
	with decision variable $Z\in \reals^{\dm_1\times \dm_2}$. First take $\zsoloy  = -UV^\top$ and $W^\star_1 =U\Sigma U^\top$ and $W^\star_2 = V\Sigma V^\top$. It can be easily verified that $\zsoloy$, $W^\star_1$ and $W^\star_2$ are feasible. We also find that
	\[
	\tr(W^\star_1)+\tr(W^\star_2)-(-2\tr(\trux^\top \zsoloy))=2\tr(\Sigma)-2\tr(\Sigma)=0.
	\]
	Hence both $\zsoloy$ and $W^\star_1=U\Sigma U^\top, W^\star_2 = V\Sigma V^\top$ are optimal, and the optimal value of
	\eqref{opt: tracemc} is $2\tr(\Sigma)=2\nucnorm{\trux}$. Now take any $\bar{W}_1$ and $\bar{W}_2$ that is optimal to
	\eqref{opt: tracemc}. Using the optimality of
	$\begin{bmatrix}
		I  & \zsoloy\\
		\zsoloy & I
	\end{bmatrix}\succeq 0$,  we have
	\[
	0 = \begin{bmatrix}
		I  & \zsoloy\\
		\zsoloy ^\top& I
	\end{bmatrix}
	\begin{bmatrix}
		\bar{W}_1 &\trux \\
		\trux^\top& \bar{W}_2
	\end{bmatrix}
	=\begin{bmatrix}
		\bar{W}_1 -U\Sigma U^\top  & \trux -UV^\top\bar{W}_2\\
		-VU^\top\bar{W}_1+\trux ^\top & -V\Sigma V^\top+\bar{W}_2
	\end{bmatrix}.
	\]Thus we must have $\bar{W}_1 = U\Sigma U^\top$ and $\bar{W}_2 =V\Sigma V^\top$.
\end{proof}
\subsection{Proof of Lemma \ref{lem: Y0construction}} \label{sec: verifyAssump1}
The matrix $Y_0$ is actually a dual certificate for $\trux$ for \eqref{mc}. We follow the construction procedure in \cite{ding2018leave}.

First set $k_0 \defn C_0\log (\inco \trur)$ for some large enough numerical constant $C_0$. 
We can suppose (without loss of generality) that the set $\ob$ of observed entries is generated from $\ob = \cup_{t=1}^{k_0} \ob_t$, where for each $t$ and matrix index $(i,j)$, $\Prob[(i,j)\in \ob_t] = q \defn 1 -(1-p)^{\frac{1}{k_0}}$, and the event $\{(i,j)\in \ob_t\}$ is independent of all others.
Denote the projection $ \projomega{t} $ by $[\projomega{t}(Z)]_{ij}= Z_{ij} \indic\{(i,j) \in \ob_t \}$, where
$\indic\{(i,j) \in \ob_t \} =1$ if $(i,j)\in \ob_t$ and $0$ otherwise. We also denote $\rprojk{t} \defn \frac{1}{q}\projomega{t}$.

We use independent samples in constructing $k_0$ building blocks of the first piece of the dual certificate: set  $W^0 \defn  UV^T$ and
\begin{align}\label{eq:defW}
	W^t \defn \projt \hpk{t} (W^{t-1}), \quad t =1,2,\dots, k_0 -1,
\end{align}
where $\hpk{t} = \Id - \frac{1}{q}\projomega{t}$, and $\Id$ is the identity map on $\reals^{\dm \times \dm}$.

We then use the same sample set $ \ob_{k_0}  $ in the next $\To \defn  2\log \dm +2 $ building blocks of the second piece of the dual
certificate: set  $Z^{0} = W^{k_0-1}$ and
\begin{align}\label{eq:defZ}
	Z^{t} \defn \projt \hpk{k_0} (Z^{t-1}) = ( \projt \hpk{k_0} )^t(W^{k_0-1}), \quad t = 1,2,\dots, \To -1.
\end{align}

The building block of the last piece is simply running \eqref{eq:defZ} for all $t\geq \To$.
The final dual certificate $ Y $ is constructed by summing up the above iterates: set
\begin{align}\label{eq: defofY}
	Y_1 \defn  \sum_{t=1}^{k_0-1} \rprojk{t}\projt(W^{t-1}),
	\,
	Y_2 \defn \sum_{t=1}^{\To} \rprojk{k_0}\projt(Z^{t-1}),
	\,
	Y_3 \defn \sum_{t=\To+1}^\infty \rprojk{k_0}\projt(Z^{t-1})
\end{align}
and our desired $Y$ is simply
\[
Y\defn Y_1+Y_2+Y_3.
\]
\paragraph{Convergence of $Y_3$} We first verify the infinite series $Y_3$ indeed converges. Denote the Frobenius norm as $\fronorm{\cdot}$. Using \cite[Theorem 4.1]{candes2009exact}, we have $\opnorm{\projt \hpk{k_0}\projt}\leq \frac{1}{4}$
%
%
for all $t\geq 1$
\begin{align}\label{eq: convergenceofZtnorm}
	\fronorm{Z^{t}}\leq \opnorm{\projt \hpk{k_0}\projt}\fronorm{Z^{t-1}}\leq \frac{1}{4}\fronorm{Z^{t-1}}.
\end{align}
Hence the series $\fronorm{Y_3}\leq\frac{1}{p} \fronorm{Z^{\To}}\sum_{t=\To+1}^\infty \frac{1}{4^t} $, and the infinite series in $Y_3$ indeed converges.
\paragraph{The condition $\projomega{}(Y)=Y$} Note that  $\projomega{}\rprojk{t}=\rprojk{t}$ for any $t$ by construction. Hence using the convergence of series in $Y_3$ and $\projomega{}$ is a continuous map, we reach  $\projomega{}(Y)=Y$
\paragraph{The condition $\projt(Y)=UV^\top $} Using the construction of $Y$, we find that
$\projt(Y_1+Y_2+ \sum_{\tau = \To+1}^t \rprojk{k_0}\projt(Z^{\tau -1}))-UV^\top= -Z^t$.
Hence, we have
\begin{equation}
	\begin{aligned}
		\fronorm{\projt(Y)-UV^\top}&=\lim_{t\rightarrow \infty}\fronorm{\projt(Y_1+Y_2+ \sum_{\tau =\To+1}^t \rprojk{k_0}\projt(Z^{\tau-1}))-UV^\top}\\
		&=\lim_{t\rightarrow \infty}\fronorm{Z^t}.
	\end{aligned}
\end{equation}
Now using \eqref{eq: convergenceofZtnorm}, we see the above is actually $0$ and hence  $\projt(Y)=UV^\top$.
\paragraph{The condition $\opnorm{\projto(Y)}\leq \frac{5}{8}$} In \cite[Section 6, ``Validating
Condition 2(a)”, pp 30-31]{ding2018leave}, it has been shown that
$\opnorm{\projt{(Y_1+Y_2)}}\leq \frac{1}{2}$. Using \cite[Inequality (92)]{ding2018leave}, we have $\fronorm{Z^{t_0}}\leq \frac{1}{4\dm}$ and hence $\fronorm{Y_3}\leq \frac{1}{2p\dm}<\frac{1}{8}$.
Thus $\opnorm{\projto(Y_3)}\leq \fronorm{\projto(Y_3)}\leq \fronorm{Y_3}\leq \frac{1}{8}$. Hence the operator norm of $\projto(Y)$ satisfies
$\opnorm{\projto(Y)}\leq\opnorm{\projto(Y_1+Y_2)}+\opnorm{\projto(Y_3)}\leq \frac{5}{8}$.
\newcontent{\subsection{Primal \newcontent{simplicity}  for \eqref{mc}}\label{sec: Primal regularity for original mc}
	
	Here we show that primal uniqueness and strict complementarity hold for \eqref{mc},
	yet the problem has multiple dual solutions,
	exactly like the lifted version \eqref{mclarge}.
	
	To show the problem has multiple dual solutions,
	note the Lagrangian dual of \eqref{mc} is
	\beq
	\ba{ll}\label{mcdualmatrix}
	\mbox{maximize} & \inprod{\trux}{\projomega{}(Y)} \\
	\mbox{subject to} & \opnorm{\projomega{}(Y)}\leq 1.
	\ea
	\eeq
	Equation \eqref{mcdualmatrix} is equivalent to equation \eqref{mclargedualmatrix} in the following sense:
	if $\tY = \begin{bmatrix}
		0& Y\\
		Y^\top  & 0
	\end{bmatrix}$ is optimal for \eqref{mclargedualmatrix}, then $Y$ is optimal for \eqref{mcdualmatrix},
	and vice versa. This is a simple consequence of $\opnorm{\tY}= \opnorm{Y}\leq 1\iff I\succeq\tY$, and the
	constraint $\projomega{}(Y)=Y$.
	Hence multiple dual solutions to \eqref{mclargedualmatrix}
	implies multiple solutions of \eqref{mcdualmatrix},
	and hence  \eqref{mc} has multiple dual solutions by Theorem \ref{thm: mc}.
	
	A proof of primal uniqueness appears in \cite[Theorem 2]{ding2018leave}.
	
	Finally, we show that strict complementarity holds. Strict complementarity 
	in this context \footnote{\newcontent{The definition of 
			strict complementarity is inspired from the dual strict complementarity defined in \cite[Section 4]{drusvyatskiy2018error} and 
			the equality in \cite[Equation (49) and (50)]{zhou2017unified}.}}
	means there exists $Y=\projomega{}(Y)$ such that $Y\in \ri\left(\partial \nucnorm{\trux}\right)$,
	where $\ri(\cdot)$ extracts the relative interior of its argument. The existence of 
	such $Y$ is ensured by \cref{lem: Y0construction}.
}
\section*{Acknowledgments}
We would like to thank Yudong Chen,
James Renegar, Adrian Lewis, and Michael L. Overton for helpful discussions.
We would also like to thank the editor and anonymous reviewers for their feedback.

\bibliographystyle{alpha}
\bibliography{references}
\end{document}